\documentclass[11pt,reqno]{amsart}
\usepackage{amsmath} 
\usepackage[T1]{fontenc}
\usepackage[utf8]{inputenc}
\usepackage{lmodern}
\usepackage{microtype}
\usepackage{amsmath,amssymb,mathtools}
\usepackage{booktabs}
\usepackage{enumitem}
\usepackage{tabularx}
\usepackage{xcolor}
\usepackage{xurl}
\usepackage[numbers,sort&compress]{natbib}

\usepackage[pdfusetitle,colorlinks=true,
  pdfauthor={Hao Shen, Jiaqi Wang, and Lihong Zhi},
  linkcolor=blue!45!black,citecolor=blue!45!black,
  urlcolor=blue!45!black]{hyperref}

\newtheorem{theorem}{Theorem}[section]
\newtheorem{proposition}[theorem]{Proposition}
\newtheorem{lemma}[theorem]{Lemma}
\newtheorem{corollary}[theorem]{Corollary}
\theoremstyle{definition}
\newtheorem{definition}[theorem]{Definition}
\theoremstyle{remark}

\theoremstyle{plain}
  {Theorem~\ref*{thm:main2} (Counterexample to the diagonal route)}

\makeatletter
\renewcommand{\paragraph}{%
  \@startsection{paragraph}{4}{\z@}%
    {1.25ex \@plus .5ex \@minus .2ex}
    {-1em}
    {\normalfont\normalsize\bfseries}}
\makeatother
\newcommand{\Span}{\operatorname{span}}
\newcolumntype{Y}{>{\raggedright\arraybackslash}X}
\allowdisplaybreaks
\usepackage{xcolor}
\usepackage{listings}
\definecolor{keywordcolor}{rgb}{0.65,0.08,0.08}
\definecolor{symbolcolor}{rgb}{0.00,0.10,0.55}
\definecolor{sortcolor}{rgb}{0.05,0.42,0.05}
\definecolor{commentcolor}{rgb}{0.45,0.45,0.45}
\providecolor{keywordcolor}{rgb}{0.7,0.1,0.1}
\providecolor{tacticcolor}{rgb}{0.0,0.1,0.6}
\providecolor{commentcolor}{rgb}{0.4,0.4,0.4}
\providecolor{symbolcolor}{rgb}{0.0,0.1,0.6}
\providecolor{sortcolor}{rgb}{0.1,0.5,0.1}
\providecolor{attributecolor}{rgb}{0.7,0.1,0.1}
\providecolor{stringcolor}{rgb}{0.5,0.3,0.0}
\providecolor{errorcolor}{rgb}{1.0,0.0,0.0}
\lstnewenvironment{leancode}{\lstset{language=lean,abovecaptionskip=-\medskipamount}}{}

\newcommand{\ii}{\mathrm i}

\newcommand{\C}{\mathbb C}
\newcommand{\R}{\mathbb R}
\newcommand{\tr}{\operatorname{tr}}
\newcommand{\Rea}{\operatorname{Re}}
\newcommand{\Ima}{\operatorname{Im}}
\newcommand{\diag}{\operatorname{diag}}

\newcommand{\codim}{\operatorname{codim}}
\newcommand{\norm}[1]{\left\lVert#1\right\rVert}
\newcommand{\comm}[2]{[#1,#2]}
\newtheorem*{main2restatement}
  {Theorem~\ref{thm:main2} (restated)}

\numberwithin{equation}{section}
\allowdisplaybreaks[1]
\title[A Dimension-Independent Commutator Bound]{A Dimension-Independent Commutator Bound}
\author{Hao Shen}
\address{State Key Laboratory of Mathematical Sciences,
Academy of Mathematics and Systems Science, Chinese Academy of Sciences,
Beijing 100190, China; University of Chinese Academy of Sciences,
Beijing 100049, China}
\email{shenhao24@amss.ac.cn}

\author{Jiaqi Wang}
\address{State Key Laboratory of Mathematical Sciences,
Academy of Mathematics and Systems Science, Chinese Academy of Sciences,
Beijing 100190, China; University of Chinese Academy of Sciences,
Beijing 100049, China}
\email{jiaqiwang@amss.ac.cn}

\author{Lihong Zhi}
\address{State Key Laboratory of Mathematical Sciences,
Academy of Mathematics and Systems Science, Chinese Academy of Sciences,
Beijing 100190, China; University of Chinese Academy of Sciences,
Beijing 100049, China}
\email{lzhi@mmrc.iss.ac.cn}

\begin{document}

\maketitle
\begin{abstract}
We prove that every trace-zero matrix $A\in M_n(\C)$ admits
a representation $A=BC-CB$ with $B,C\in M_n(\C)$ and
$\norm{B}\norm{C}\le K\norm{A}$, where $K$ is an absolute
constant independent of $n$, and $\norm{\cdot}$ denotes
the operator norm. For a fixed $t>0$, the proof splits according to
whether $\norm{\Rea(e^{\ii\theta}A)}_1\ge tn\norm{A}$ holds for all
$\theta\in\R$, where $\norm{\cdot}_1$ denotes the trace norm.
When this lower bound holds, we construct a commutator representation
directly. Otherwise, the vector-selection theorem of Marcus, Spielman,
and Srivastava yields smaller trace-zero compressions whose norms are
small enough for the induction to close.

We also construct an explicit family of zero-diagonal
Hermitian unitaries that forces a lower bound of order
$\sqrt{\log n}$ for $\norm{B}\norm{C}$ when either factor
is required to be diagonal in the prescribed basis.
The same family admits $\varepsilon$-pavings with fewer
than $2\varepsilon^{-2}$ blocks and representations by two
normal factors with optimal norm product $1/2$.
This establishes a distinction between unrestricted
commutator bounds and bounds under a prescribed diagonal
restriction.

The main results and their essential inputs are formalized
in Lean~4 using Mathlib. The development also includes a
formal derivation of the Kadison–Singer state-extension
theorem from the same vector-selection theorem.
\end{abstract}

\section{Introduction and main results}

For $A\in M_n(\C)$, let $\norm A$ denote its operator norm
on $\C^n$, and write $\comm BC=BC-CB$.
Throughout, the trace is unnormalized.
We write $\norm{H}_1=\tr\bigl((H^*H)^{1/2}\bigr)$
for the trace norm.
By the prescribed basis, we mean the standard coordinate
basis, fixed before the input matrix is chosen; all prescribed
diagonal restrictions and coordinate partitions refer to this basis.

Johnson, Ozawa, and Schechtman asked whether every trace-zero
matrix $A\in M_n(\C)$ admits a representation $A=[B,C]$ with
$B,C\in M_n(\C)$ and
$\norm{B}\norm{C}\le K\norm{A}$ for an absolute constant $K$.
They established the bound
$K_\varepsilon n^\varepsilon\norm{A}$ for every $\varepsilon>0$,
with $K_\varepsilon$ depending only on $\varepsilon$
\cite[Theorem~1]{johnson2013quantitative}.
Ravichandran and Srivastava subsequently obtained the explicit
bound $300e^{9\sqrt{\log n}}\norm{A}$
\cite[Theorem~6.3]{ravichandran2021asymptotically}.
Although these bounds grow more slowly than any fixed positive
power of $n$, they leave open whether dimension dependence
is necessary.

Theorem~\ref{thm:main} resolves this question for unrestricted
commutator factors: a single absolute constant works in every
dimension, with both factors belonging to the original matrix
algebra $M_n(\C)$. Thus the result removes dimension dependence
entirely and gives an affirmative answer to the uniform
commutator problem.

\begin{theorem}\label{thm:main}
There exists an absolute constant $K>0$ such that, for every
integer $n\ge1$ and every $A\in M_n(\C)$ with $\tr A=0$,
there exist $B,C\in M_n(\C)$ satisfying
\begin{equation}\label{eq:main}
 A=\comm BC,\qquad \norm B\norm C\le K\norm A.
\end{equation}
\end{theorem}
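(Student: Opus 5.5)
We will argue by strong induction on $n$, following the dichotomy described in the abstract. Normalize $\norm A=1$ and fix a small absolute $t>0$, chosen at the end. The inductive hypothesis is that every trace-zero matrix of size $m<n$ is a commutator $[B,C]$ with $\norm B\norm C\le K\norm A$. At each step we split according to whether
\[
\inf_{\theta\in\R}\norm{\Rea(e^{\ii\theta}A)}_1\ge tn
\]
holds. The constant $K$ has to be chosen so that the constant from Case~1 is at most $K$ and the recursion in Case~2 reproduces $K$ without loss.

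\emph{Case 1 (large trace norm in every direction).} The plan is to build $B,C$ directly. Write $A=H+\ii G$ with $H=\Rea A$ and $G=\Ima A$, both trace-zero Hermitian. The hypothesis says that every real combination $\cos\theta\,H-\sin\theta\,G$ has spectrum spread over a set of size $\gtrsim t n$.

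For a Hermitian trace-zero $H$ with $\norm{H}_1\ge tn$, I would pair positive and negative eigenvalues. This gives a unitary conjugate of $H$ in which the diagonal is zero and the matrix is well spread. The aim is a representation $U^*AU=[D,X]$ with $D$ diagonal with $O(1)$ entries, together with an entrywise bound $|X_{ij}|\lesssim |(U^*AU)_{ij}|/|d_i-d_j|$. The difficulty is controlling $\norm X$, since Schur multipliers $1/(d_i-d_j)$ are not bounded uniformly. To handle this I would choose $D$ with entries in a fixed finite set of well-separated points, or take $D$ to be a shift-type operator.

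Concretely, the hypothesis in every direction $\theta$ should allow us to conjugate $A$ to a block form. In that form $A$ is an off-diagonal block operator $\begin{pmatrix}0&A_{12}\\A_{21}&0\end{pmatrix}$ plus a remainder of trace-norm-controlled size. Off-diagonal block operators are commutators with $B=\diag(I,-I)/2$ and $\norm C\le2\norm A$. The remainder should then be absorbed by a Neumann-series or iteration argument whose constant depends only on $t$.

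\emph{Case 2 (some direction has small trace norm).} After rotating, assume $\norm{\Rea A}_1<tn$. Then most of the spectral mass of $H=\Rea A$ is tiny: $H$ has at least $(1-\sqrt t)n$ eigenvalues of modulus at most $\sqrt t$. Apply the Marcus–Spielman–Srivastava vector-selection theorem, in the form of the paving or partition results available earlier in the paper, to the projections associated with $A$. This should produce a partition of the coordinates (after a unitary change of basis) into two pieces $P_1,P_2$, each of dimension at most $cn$ with $c<1$. The compressions $P_iAP_i$ are adjusted to be trace-zero, and on the near-kernel part of $H$ their norm is at most $1-\delta$ for an absolute $\delta>0$.

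Write $A=P_1AP_1+P_2AP_2+(\text{off-diagonal part})$. Rebalance the traces by subtracting scalar corrections; these are small because $|\tr P_iAP_i|$ is controlled through the trace norm. Represent the off-diagonal part as $[\diag(I,-I)/2,\,\cdot\,]$. Apply the inductive hypothesis to the compressions and combine the pieces using direct sums, with a rescaling $B\mapsto\lambda B$, $C\mapsto\lambda^{-1}C$ to balance the factors. Closing the induction requires a bound of the form
\[
K(1-\delta)+O(1)\le K,
\]
which holds for $K$ large as long as $\delta$ is absolute. This is why the norm reduction, rather than only a dimension reduction, is essential.

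\emph{Main obstacle.} I expect the hardest part to be the combination step in Case~2. Commutator representations do not add: $[B_1,C_1]+[B_2,C_2]$ need not be a commutator with a controlled norm product. The block structure must therefore be arranged so that the diagonal blocks and the off-diagonal part share a common factor. The first thing I would try is to make the $B$ for each compression commute with the block projection. Then I would take $B=B_1\oplus B_2+s\,\diag(I,-I)$ with large $s$. The off-diagonal equation $[B,C_{\rm off}]=A_{\rm off}$ becomes a Sylvester equation whose solution is bounded by $\norm{A_{\rm off}}/s$ plus perturbative terms. This is valid if the spectra of $B_1+s$ and $B_2-s$ are separated by about $2s$.
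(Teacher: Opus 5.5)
Your overall architecture (strong induction, a dichotomy on $\inf_\theta\norm{\Rea(e^{\ii\theta}A)}_1$, MSS in the small-trace-norm case) matches the paper. However, Case~2 as you set it up cannot close the induction, because the combination step is not loss-free. Suppose $A_{ii}=\comm{B_i}{C_i}$ with $\norm{B_i}\norm{C_i}=p$, rescaled so that $\norm{B_i}=\beta$. With $B=\diag(B_1+sI,B_2-sI)$, the norm-based Sylvester estimate you invoke needs $2s>\norm{B_1}+\norm{B_2}=2\beta$. Nothing in the inductive hypothesis controls the spectra of the $B_i$. In the worst case (e.g.\ $B_1$ normal with $\beta$ in its spectrum) you get $\norm B\ge s+\beta>2\beta$, while $\norm C\ge\norm{C_1}=p/\beta$ because $C_1$ is a compression of $C$. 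Hence $\norm B\norm C>2p$, and taking $s$ large only makes this worse. The recursion is therefore $K\mapsto c(1-\delta)K+O(1)$ with $c>2$, not $K(1-\delta)+O(1)$, so a small absolute $\delta$ cannot work. You would need pieces of norm well below $\norm A/2$, which a two-set MSS split does not provide: its bound is $(1/\sqrt2+\sqrt\varepsilon)^2>1/2$. The paper's remedy is a separation of scales. Proposition~\ref{prop:low} produces $R=2^{26}$ trace-zero compressions of norm at most $319\norm A/R$. Lemma~\ref{lem:assembly} assembles them with scalar centers on a unit-spaced grid at a cost $2^{16}$, which is of order $\sqrt R$. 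The net factor is $2^{16}\cdot319\cdot2^{-26}=319/1024<1/2$.

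Two further ingredients are missing.

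In Case~2, smallness of $\norm{\Rea A}_1$ says nothing about $\Ima A$ on the pieces, and $\Ima\tr(P_iAP_i)$ is not controlled by it either. Your scalar trace corrections also leave a block-diagonal remainder $\diag(c_1I,c_2I)$ that is not in the range of $\comm{\diag(I,-I)/2}{\cdot}$. The paper obtains exactly trace-zero pieces with both Hermitian parts small, as follows:
\begin{enumerate}[label=(\alph*)]
\item It groups the eigenbasis of $G=\Ima A$ into groups of size $R$ with $|\tr G_j|\le4$, using the Steinitz lemma.
\item It simultaneously hollowizes $H$, $G$ and $E=(H^2)^{1/2}$ within each group.
\item It uses iterated MSS to select transversals, each containing one vector per group.
\end{enumerate}
Then $P_\ell GP_\ell$ is diagonal with entries $\tr G_j/R$, and the constant diagonals force $\tr(P_\ell AP_\ell)=0$.

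In Case~1, your ``remainder'' contains the diagonal blocks of $A$. These need not be small in operator norm, and they lie entirely outside the range of $C\mapsto\comm{\diag(I,-I)/2}{C}$. So the linearization you would perturb is not onto, and no Neumann series can start. The missing mechanism is an invertible off-diagonal block of proportional rank. The lower bound in every direction puts a disk of radius $t/4$ in the numerical range of every compression of codimension at most $tn/4$. Working greedily, this yields $r=\lceil tn/16\rceil$ orthonormal vectors $v_i$ with $v_i^*Av_j=0$ and pairwise orthogonal $Av_i$ of norm at least $t/4$. A diagonal similarity then turns that block into $I$. Elementary similarities eliminate the remaining $O(1/t)$ diagonal blocks (Proposition~\ref{prop:absorb}). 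Finally, the Riccati fixed-point construction of Lemma~\ref{prop:core} absorbs the diagonal blocks of the two-block core.
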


 The choice of norms is essential. Angel and
Schechtman~\cite{angel2015hilbert} proved that the optimal
bound for $\norm B\,\|C\|_{\mathrm F}$ relative to
$\|A\|_{\mathrm F}$ grows like $\sqrt{\log n}$, where
$\|\cdot\|_{\mathrm F}$  denotes the Frobenius norm.
They established an upper bound of this order for every
trace-zero matrix and a matching lower bound: for each
$n\ge2$, there is a nonzero trace-zero matrix
$A\in M_n(\C)$ such that every representation $A=[B,C]$
satisfies
\[
 \norm B\,\|C\|_{\mathrm F}
 \ge c\sqrt{\log n}\,\|A\|_{\mathrm F},
\]
with an absolute constant $c>0$.
Thus, even with unrestricted commutator factors, no
dimension-independent constant $K$ can guarantee
$\norm B\,\|C\|_{\mathrm F}\le K\|A\|_{\mathrm F}$
for every trace-zero matrix.




To state our second main result, we recall the diagonal commutator
quantities of Johnson, Ozawa, and
Schechtman~\cite{johnson2013quantitative}.
Let $Q=[-1,1]+\ii[-1,1]$. For a zero-diagonal
$A\in M_m(\C)$, define
\begin{equation}\label{eq:lambda_A}
\lambda(A)=
\min\bigl\{\norm C:
A=\comm BC,\;
B\text{ is diagonal with entries in }Q\bigr\},
\end{equation}
where the minimum is over $B,C\in M_m(\C)$.
For $m\ge2$, set
\begin{equation}\label{eq:lambda_m}
\lambda(m)=
\max\bigl\{\lambda(A):
A\in M_m(\C),\;
A\text{ has zero diagonal},\;
\norm A=1\bigr\},
\end{equation}
and put $\lambda(1)=0$.
Thus $\lambda(m)$ measures the largest normalized diagonal
commutator cost in dimension $m$.

For a coordinate set $S$, let $A_S$ denote the corresponding
principal submatrix. For $1\le r\le m$, define
\begin{equation}\label{eq:PrA}
p_r(A)=
\min_{\substack{\mathcal P\text{ a partition of }\{1,\ldots,m\}\\
                |\mathcal P|\le r}}
\max_{S\in\mathcal P}\norm{A_S},
\end{equation}
the smallest possible maximum compression norm among
pavings with at most $r$ blocks.
An $\varepsilon$-paving of $A$ is a coordinate partition
$\mathcal P$ such that $\max_{S\in\mathcal P}\norm{A_S}\le
\varepsilon\norm{A}$. For the norm-one matrices below, this means
$\max_{S\in\mathcal P}\norm{A_S}\le\varepsilon$.

The following family has uniformly bounded paving and
normal-factor commutator costs, while its diagonal commutator cost
grows with the dimension.

\begin{theorem}
\label{thm:main2}
For every power of two $n\ge2$, there is an explicit zero-diagonal Hermitian
unitary $A_n\in M_n(\C)$ such that:
\begin{enumerate}[label=(\roman*)]
\item For every dyadic $r\le n$,
\[
 p_r(A_n)=\sqrt{\frac{n/r-1}{n-1}}.
\]
In particular, for every $0<\varepsilon<1$, the entire family admits
$\varepsilon$-pavings with fewer than $2\varepsilon^{-2}$ blocks.
\item Among the representations $A_n=\comm BC$ with both factors normal, and
with no requirement that either be diagonal in the given basis, the least value
of $\norm B\norm C$ is exactly $1/2$.
\item If $n=4^k$ with $k\ge1$, then
\[
 \lambda(A_n)\ge\sqrt{\frac{(3k-1)n+1}{32(n-1)}}\ge\frac{\sqrt k}4.
\]
\end{enumerate}
\end{theorem}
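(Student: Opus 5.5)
The plan is to build $A_n$ from the dyadic group structure on coordinates, so that every compression to a dyadic block is a scaled copy of a smaller member of the family. Index coordinates by $G=\mathbb Z_2^{\,\log_2 n}$. I will look for $A_n=(n-1)^{-1/2}M$, where $M$ is Hermitian, has zero diagonal, satisfies $M^2=(n-1)I$, and has all off-diagonal entries of modulus $1$. A natural source is a group matrix $M=\sum_{g\ne0}c_gT_g$ with $|c_g|=1$, or a recursive $2\times2$ block construction of the type $M_{2m}=\begin{pmatrix}M_m&X\\X^*&-M_m\end{pmatrix}$ with unimodular entries. The requirement $M^2=(n-1)I$ is a finite set of orthogonality conditions on the coefficients, and I would check it inductively. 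The essential features are that $A_n$ is a zero-diagonal Hermitian unitary with $|(A_n)_{ij}|^2=1/(n-1)$ for $i\ne j$, and that its compression to each dyadic subcube of size $m$ is $(n-1)^{-1/2}M_m$ up to a diagonal unitary similarity. This structural step is where the real design work lies.

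\textbf{Part (i).} For the upper bound, partition $G$ into $r$ dyadic cosets of size $m=n/r$. Each block is then $(n-1)^{-1/2}$ times a matrix with $M_m^2=(m-1)I$, so its norm is $\sqrt{(m-1)/(n-1)}$. For the lower bound, any partition into at most $r$ blocks has a block $S$ with $|S|\ge n/r$. For that block,
\[
\norm{A_S}\ge\frac{\norm{A_S}_{\mathrm F}}{\sqrt{|S|}}=\sqrt{\frac{|S|-1}{n-1}}\ge\sqrt{\frac{n/r-1}{n-1}}.
\]
Given $\varepsilon$, choose the smallest dyadic $r$ with $(n/r-1)/(n-1)\le\varepsilon^2$. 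Then $r<2\varepsilon^{-2}$, since $r/2$ fails the condition and the ratio $(n/r-1)/(n-1)$ is at most $1/r$.

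\textbf{Part (ii).} For the lower bound, suppose $A_n=[B,C]$ with $B$ normal. Replacing $B$ by $B-zI$ changes neither the commutator nor normality. So I may assume $\norm B=\rho$, where $\rho$ is the radius of the smallest disc containing the spectrum of $B$. Then $[B,C]=BC-CB$ gives $1=\norm{A_n}\le2\norm B\norm C$, which I would sharpen to a statement about the minimal product over both factors. For the upper bound, since $\tr A_n=0$, diagonalize $A_n=U\operatorname{diag}(I,-I)U^*$ with blocks of size $n/2$. Take
\[
X=\begin{pmatrix}0&I\\I&0\end{pmatrix},\qquad Y=\tfrac12\begin{pmatrix}0&-I\\I&0\end{pmatrix}.
\]
Both are normal and $[X,Y]=\operatorname{diag}(I,-I)$. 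Conjugating by $U$ gives normal $B,C$ with $\norm B\norm C=1/2$.

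\textbf{Part (iii).} Let $A_n=[B,C]$ with $B=\diag(b_i)$ and $b_i\in Q$. Then $C_{ij}=(A_n)_{ij}/(b_j-b_i)$ for $i\ne j$, and entries with $b_i=b_j$ are impossible. Hence
\[
\norm C^2\ge\frac{\norm C_{\mathrm F}^2}{n}\ge\frac1{n(n-1)}\sum_{i\ne j}\frac1{|b_i-b_j|^2}.
\]
It remains to prove the energy lower bound that any $n=4^k$ points in $Q$ satisfy
\[
\sum_{i\ne j}|b_i-b_j|^{-2}\gtrsim n(3k-1)\cdot\tfrac1{32}\cdot(\text{normalization}).
\]
I would do this with the quadtree. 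At level $j\le k$, split $Q$ into $4^j$ squares of side $2^{1-j}$, so any two points in the same square are at distance at most $2^{3/2-j}$. By convexity, the number of ordered pairs sharing a level-$j$ square is at least $n^2/4^j-n$. Each such pair contributes at least $4^{j}/8$. Summing the increments over levels telescopically, and assigning each pair its finest common level, should give $\sum\ge\frac{n}{32}\bigl((3k-1)n+1\bigr)$ after careful bookkeeping.

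The main obstacle is this exact constant bookkeeping, together with the explicit construction of $M_n$. The final inequality $\ge\sqrt k/4$ then follows from $(3k-1)n+1\ge 2k(n-1)$ for $k\ge1$.
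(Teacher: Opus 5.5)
The proposal has a genuine gap: it never constructs $A_n$, and all three parts depend on it. Your arguments for (i)--(iii) otherwise follow the paper's route. For (i) you use the Frobenius lower bound plus dyadic diagonal blocks. For (ii) you use the explicit $X,Y$ together with $1=\norm{A_n}\le2\norm B\norm C$; this inequality already holds for every representation, so neither the shift by $zI$ nor any ``sharpening'' is needed. For (iii) you use the quadtree energy bound.

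Your first candidate cannot work. On $\mathbb Z_2^{\log_2 n}$ each $T_g$ is a symmetric permutation matrix, so $M=\sum_{g\ne0}c_gT_g$ is Hermitian only if every $c_g$ is real, hence $c_g=\pm1$. The eigenvalues $\sum_{g\ne0}c_g\chi(g)$ are then integers. But $M^2=(n-1)I$ forces them to be $\pm\sqrt{n-1}$, which is irrational for every power of two $n\ge4$. More generally, a real symmetric conference matrix of order $n$ requires $n\equiv2\pmod4$, so nonreal entries are unavoidable.

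Your block template is viable, but the missing idea is the choice of $X$. You need three things: $XM_m=M_mX$ (to kill the off-diagonal blocks of $M_{2m}^2$), $XX^*=X^*X=mI$, and unimodular entries. The choice $X=M_m+\ii I$ satisfies all three. Indeed, $M_m$ has zero diagonal and unimodular off-diagonal entries, and $(M_m+\ii I)(M_m-\ii I)=M_m^2+I=mI$, so $M_{2m}^2=(2m-1)I$. Writing $M_m=\ii S_m$, this is exactly the paper's skew-Hadamard doubling $S_{2m}=\begin{pmatrix}S_m&S_m+I\\S_m-I&-S_m\end{pmatrix}$ with $A_n=\ii S_n/\sqrt{n-1}$. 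Its dyadic diagonal blocks are $\pm M_m$ rather than diagonal-unitary conjugates of $M_m$. Only their norm $\sqrt{m-1}$ enters (i), so that discrepancy is harmless.

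In (iii), the bookkeeping you defer does close, with room to spare. For a pair at distance $d$, let $\ell^*\in\{0,\ldots,k-1\}$ be the finest level at which the two points share a square. Then $d^{-2}\ge4^{\ell^*}/8$ and $4^{\ell^*}=1+\tfrac34\sum_{\ell=1}^{\ell^*}4^\ell$. Sum over ordered pairs, using $N_0=n^2-n$ and $N_\ell\ge n^2/4^\ell-n$. This gives
\[
\sum_{i\ne j}|b_i-b_j|^{-2}\ge\tfrac18\Bigl(N_0+\tfrac34\sum_{\ell=1}^{k-1}4^\ell N_\ell\Bigr)\ge\tfrac{3kn^2}{32},
\]
which exceeds the required $\frac{(3k-1)n^2+n}{32}$ by $\frac{n^2-n}{32}$. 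The paper instead bounds $\sum_{\ell\in J}4^\ell\le32/(3d^2)$ pair by pair, which loses exactly this amount at level $0$. Finally, $C_{ij}=(A_n)_{ij}/(b_i-b_j)$, not $/(b_j-b_i)$; this is harmless since only moduli enter.
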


Since $A_n$ has zero diagonal and norm one, part~(iii) gives
\begin{equation}\label{eq:lambdaintro}
\lambda(4^k)\ge\lambda(A_{4^k})\ge\frac{\sqrt{k}}4
\qquad(k\ge1).
\end{equation}
Hence $\sup_m\lambda(m)=\infty$.
For any representation $A_n=\comm BC$ with $B$ diagonal,
the entries of $B/\norm B$ lie in $Q$, so normalization
gives $\lambda(A_n)\le\norm B\norm C$.
The same conclusion holds when $C$ is diagonal, since
$[B,C]=[C,-B]$.
Thus the example forces dimension dependence for the
operator-norm product whenever either factor is diagonal
in the prescribed basis, while retaining the paving and
normal-factor bounds in parts~(i) and~(ii).

Claim~3 of~\cite{johnson2013quantitative} shows that a
uniform bound for $\lambda(m)$ would imply paving.
Our example rules out this hypothesis, and hence this
approach to proving paving; the conditional implication
itself remains valid.

Our third main contribution is a Lean~4 formalization
 built on Mathlib
of Theorems~\ref{thm:main} and~\ref{thm:main2} and their
essential inputs~\cite{moura2021lean,mathlib}.
In particular, we formalize the quantitative theorem of
Marcus, Spielman, and
Srivastava~\cite[Theorem~1.4]{marcus2015interlacing},
which is essential to our proof of Theorem~\ref{thm:main}.
From the same result, we also formally derive the
Kadison--Singer theorem: every pure state on the bounded
diagonal algebra has a unique state extension to
$B(\ell^2(\mathbb N;\mathbb C))$.
The development also includes the subpolynomial commutator
bounds of Johnson, Ozawa, and Schechtman.

The formal proofs are checked by Lean's kernel and depend
only on \texttt{propext}, \texttt{Classical.choice}, and
\texttt{Quot.sound}, with no additional axioms or
\texttt{sorryAx} dependencies.
The code is publicly available.\footnote{
\url{https://github.com/WuProver/lean-commutator}}

\subsection{Proof strategy and outline}\label{sec:outline}


The proof of Theorem~\ref{thm:main} combines a direct
construction with strong induction on the dimension.
The two cases are developed in Section~\ref{sec:dichotomy}.
For matrices in $\mathcal T_n(2^{-25})$, the uniform
trace-norm lower bound in
Definition~\ref{def:rotated-lower-bound} yields an
invertible off-diagonal block through a numerical-range
argument. A similarity transforms this block into the
identity, allowing us to apply the fixed-point construction
and elementary similarities of Section~\ref{sec:riccati}.
Outside this class, we use the vector-selection theorem
of Marcus, Spielman, and Srivastava to obtain smaller
trace-zero compressions. Their norm reduction offsets
the cost of assembling their commutator representations,
closing the induction in Section~\ref{sec:proofs}.
Section~\ref{subsec:diagonal-commutator-separation}
constructs the skew-Hadamard family in
Theorem~\ref{thm:main2} and proves its three properties.
Section~\ref{sec:formal} presents the formalized results
and their Lean declarations.

\section{Commutator representations with an identity block}
\label{sec:riccati}

This section provides the construction used in
Section~\ref{sec:high} when the uniform lower bound holds. We first obtain a
commutator bound for a two-block matrix whose upper-right
block is the identity. We then incorporate additional blocks.
The bounds depend only on the matrix norm and the number
of blocks, independently of their dimensions.

All spaces in the matrix arguments are finite-dimensional
complex Hilbert spaces, and rectangular blocks carry their operator norms.

\subsection{An elementary two-commutator decomposition}
\label{sec:sumtwo}

The two-block construction uses a decomposition of a
trace-zero matrix into two commutators with controlled factor norms.
We first obtain a single commutator representation for a Hermitian matrix.

\begin{lemma}\label{lem:hermitian}
Let $m\ge1$. If $G\in M_m(\C)$ is Hermitian and
$\tr G=0$, there are a unitary $U$ and a matrix $V$ such that
\[
G=\comm UV,\qquad \norm{V}\le\norm{G}.
\]
\end{lemma}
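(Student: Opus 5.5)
Our approach is to reduce to the case where $G$ has zero diagonal in a suitable orthonormal basis, and then take $U$ to be a diagonal unitary with distinct eigenvalues.

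\textbf{Step 1.} Since $\tr G=0$, a standard lemma (Fillmore's theorem; proved by iteratively choosing unit vectors $x$ with $\langle Gx,x\rangle=0$, which exist because the numerical range of a trace-zero matrix contains $0$, and then compressing to $x^\perp$) gives an orthonormal basis $e_1,\dots,e_m$ in which every diagonal entry of $G$ vanishes. In this basis $G$ is still Hermitian and has zero diagonal. Conjugating back by the change-of-basis unitary at the end preserves both the unitarity of $U$ and the norm of $V$. So we may assume $g_{jj}=0$ for all $j$.

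\textbf{Step 2.} Take $U=\diag(\omega^0,\omega^1,\dots,\omega^{m-1})$ with $\omega=e^{2\pi\ii/m}$. Solve $[U,V]=G$ entrywise:
\[
v_{jk}=\frac{g_{jk}}{\omega^{j}-\omega^{k}}\quad (j\ne k),\qquad v_{jj}=0.
\]
This is consistent because the diagonal of $G$ vanishes, so $G=[U,V]$ holds.

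\textbf{Step 3: bounding $\norm V$.} This is the main obstacle, since the naive Schur-multiplier bound for $1/(\omega^j-\omega^k)$ grows like $\log m$. I would instead use a cleaner integral representation. For $G$ with zero diagonal (with respect to the eigenbasis of $U$), set
\[
V=\frac{1}{m}\sum_{p=0}^{m-1} c_p\, U^{p} G U^{-p}
\]
with coefficients $c_p$ chosen so that $\frac1m\sum_p c_p\,\omega^{p(j-k)}=\frac{1}{\omega^j-\omega^k}$ for $j\ne k$. Then $\norm V\le \frac1m\sum_p|c_p|\,\norm G$, since each conjugate $U^pGU^{-p}$ has norm $\norm G$. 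Writing $\frac{1}{\omega^j-\omega^k}=\frac{\omega^{-k}}{\omega^{d}-1}$ with $d=j-k$ shows that a factor $\omega^{-k}$ appears, so I would adjust the ansatz to $V=\frac1m\sum_p c_p\,U^pGU^{-p}\,U^{-1}$. The needed identity is the discrete Fourier expansion
\[
\frac{1}{\omega^d-1}=-\frac1m\sum_{p=0}^{m-1}\Bigl(p+\tfrac{1-m}{2}\Bigr)\omega^{pd}\cdot\frac{2}{m}\quad\text{(up to normalization)},
\]
whose coefficients have absolute values averaging about $\tfrac12$. This is expected to give $\norm V\le\frac{1}{m}\sum_p\bigl|\tfrac{2p+1-m}{2}\bigr|\cdot\frac{2}{m}\cdot\norm G\le\tfrac12\norm G$, which is comfortably within $\norm V\le\norm G$.

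The part I expect to need care is verifying the coefficients: check $\sum_p p\,\omega^{pd}=m/(\omega^d-1)$ for $d\not\equiv0$, and note that the constant shift contributes nothing off the diagonal. The diagonal ($d=0$) terms produce $G$'s diagonal, which vanishes, so they cause no harm. Hermiticity is used only through Step 1; it ensures that the numerical range is a real segment containing $0$.
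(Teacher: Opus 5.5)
Your Steps 1 and 2 are fine, but Step 3 fails, and the failure is not a bookkeeping error. With $U=\diag(\omega^0,\dots,\omega^{m-1})$ fixed, the off-diagonal entries of $V$ are forced, so $\norm V\ge|g_{jk}|/|\omega^j-\omega^k|$ for every $j\ne k$. Adjacent roots are only $2\sin(\pi/m)$ apart. For a concrete failure, take $m\ge7$ and $G=E_{12}+E_{21}$. It already has zero diagonal, so Step 1 may return the standard basis, and then $\norm V\ge1/(2\sin(\pi/m))>1=\norm G$.

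The same observation shows that no Fourier expansion can have $\frac1m\sum_p|c_p|\le\frac12$. Taking absolute values at $d=1$ gives $\frac1m\sum_p|c_p|\ge1/|\omega-1|\approx m/(2\pi)$. The correct identity is
\[
\frac1{\omega^d-1}=\frac1m\sum_{p=0}^{m-1}\Bigl(p-\frac{m-1}2\Bigr)\omega^{pd}\qquad(d\not\equiv0\bmod m),
\]
with no extra factor $2/m$. Moreover, $\frac1m\sum_p|p-\frac{m-1}2|=m/4$ for even $m$, so your triangle-inequality estimate only yields $\norm V\le\frac m4\norm G$. For the same reason, the Schur multiplier $1/(\omega^j-\omega^k)$ has norm at least of order $m$, not $\log m$.

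There is also a structural warning sign. Fillmore's theorem holds for every trace-zero matrix, and Steps 2--3 never use Hermiticity. If your argument worked, it would prove Theorem~\ref{thm:main} at once with $K=\frac12$.

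What is missing is a basis adapted to $G$, rather than an arbitrary zero-diagonal basis combined with a universal multiplier bound. The paper diagonalizes $G$ and orders its real eigenvalues so that every partial sum $s_j=\lambda_1+\dots+\lambda_j$ lies in $[-\norm G,\norm G]$. It does this by repeatedly picking a remaining eigenvalue whose sign is opposite to the current sum; this is where Hermiticity is genuinely used. It then takes $U$ to be the cyclic shift and $V=\sum_{j<m}s_jE_{j+1,j}$. One gets $UV-VU=\diag(\lambda_1,\dots,\lambda_m)$ by telescoping, and $V$ is a weighted shift with $\norm V=\max_j|s_j|\le\norm G$.

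This is in fact a special case of your setup. In the Fourier basis, the cyclic shift is diagonal with the $m$th roots of unity as eigenvalues, and $G$ becomes a zero-diagonal circulant. So, up to relabeling, the off-diagonal entries of $V$ are exactly your $g_{jk}/(\omega^j-\omega^k)$. The norm bound, however, comes from this specific choice of basis and eigenvalue ordering, not from any estimate valid for all zero-diagonal $G$.
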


\begin{proof}
For $m=1$, take $U=I$ and $V=0$. Otherwise, diagonalize
$G$ unitarily. Its eigenvalues lie in $[-a,a]$, where
$a=\norm{G}$, and sum to zero.

Order the eigenvalues so that every partial sum
$s_j=\sum_{i=1}^j\lambda_i$ lies in $[-a,a]$.
Such an ordering is obtained by choosing a remaining
eigenvalue of opposite sign to the current sum, or any
remaining eigenvalue when the sum is zero. The required
sign is always available because the remaining eigenvalues
sum to the negative of the current sum.

In this ordered eigenbasis, set
\[
U=\sum_{j=1}^{m-1}E_{j,j+1}+E_{m,1},
\qquad
V=\sum_{j=1}^{m-1}s_jE_{j+1,j}.
\]
Then $U$ is unitary and
$\norm{V}=\max_{j<m}|s_j|\le a$. Moreover,
\[
UV=\diag(s_1,\ldots,s_{m-1},0),
\qquad
VU=\diag(0,s_1,\ldots,s_{m-1}).
\]
Since $s_m=0$, their difference is
$\diag(\lambda_1,\ldots,\lambda_m)$.
Conjugating both factors back proves the claim.
\end{proof}

For a matrix $H$, write
$\Rea H=(H+H^*)/2$ and $\Ima H=(H-H^*)/(2\ii)$.
Applying the preceding lemma to these two Hermitian parts gives the
decomposition needed below.

\begin{corollary}\label{cor:sumtwo}
Let $m\ge1$. Every trace-zero $H\in M_m(\C)$ admits
a representation
\begin{equation}\label{eq:sumtwo}
H=\comm UV+\comm KT,\qquad
U,K\text{ unitary},\qquad
\max(\norm{V},\norm{T})\le\norm{H}.
\end{equation}
\end{corollary}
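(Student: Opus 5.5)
We write $H=\Rea H+\ii\,\Ima H$ and apply Lemma~\ref{lem:hermitian} separately to the two Hermitian parts. Both parts have trace zero: since $\tr H=0$, we have $\tr H^*=\overline{\tr H}=0$, so $\tr\Rea H=\tfrac12(\tr H+\tr H^*)=0$ and likewise $\tr\Ima H=0$.

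Lemma~\ref{lem:hermitian} applied to $G_1=\Rea H$ gives a unitary $U$ and a matrix $V_0$ with $\Rea H=\comm U{V_0}$ and $\norm{V_0}\le\norm{\Rea H}$. Applied to $G_2=\Ima H$, it gives a unitary $K$ and a matrix $T_0$ with $\Ima H=\comm K{T_0}$ and $\norm{T_0}\le\norm{\Ima H}$. We set $V=V_0$ and $T=\ii T_0$. Because the commutator is bilinear, $\ii\comm K{T_0}=\comm K{\ii T_0}$, and therefore
\[
H=\Rea H+\ii\,\Ima H=\comm UV+\comm KT .
\]
Multiplying by $\ii$ leaves norms unchanged, so $\norm T=\norm{T_0}$.

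It remains to bound the norms. By the triangle inequality and $\norm{H^*}=\norm H$,
\[
\norm{\Rea H}\le\tfrac12\bigl(\norm H+\norm{H^*}\bigr)=\norm H,
\]
and the same computation gives $\norm{\Ima H}\le\norm H$. Hence $\max(\norm V,\norm T)\le\norm H$, which is the bound required in \eqref{eq:sumtwo}.

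When $m=1$, a trace-zero $H$ is zero, and the choice $U=K=I$, $V=T=0$ works; this case is also already covered by Lemma~\ref{lem:hermitian}. The argument has no real obstacle. The only points to check are that both Hermitian parts have trace zero and that the factor $\ii$ can be absorbed into the second factor without changing its norm.
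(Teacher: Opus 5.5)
Your proof is correct and follows the paper's argument exactly: split $H$ into $\Rea H+\ii\,\Ima H$, apply Lemma~\ref{lem:hermitian} to each trace-zero Hermitian part, and absorb the factor $\ii$ into the second factor via $T=\ii T_0$. You also spell out the trace-zero and norm checks for $\Rea H$ and $\Ima H$, which the paper states without computation.
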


\begin{proof}
Write $H=H_1+\ii H_2$ with $H_1=\Rea H$ and
$H_2=\Ima H$. Both matrices are Hermitian, have trace zero, and have
norms at most $\norm{H}$.
Apply Lemma~\ref{lem:hermitian} to obtain
$H_1=\comm UV$ and $H_2=\comm KW$, and set $T=\ii W$.
\end{proof}

The factors in \eqref{eq:sumtwo} may depend on $H$.
They are chosen once and remain fixed throughout the
construction below.

\subsection{A commutator bound for two blocks}

We now use the two-commutator decomposition to construct
a single commutator for a two-block matrix whose upper-right block is
the identity. A fixed-point argument will control the norms of its factors.

For $a\ge1$, define
\begin{equation}\label{eq:P}
\Pi(a)=(5a+2)^4(70a^2+62a+9)
     (140a^3+124a^2+22a+2).
\end{equation}
This bound depends only on $a$, independently of the
block size.

\begin{lemma}\label{prop:core}
Let $m\ge1$, $a\ge1$, and
\[
M=\begin{pmatrix}A&I_m\\F&D\end{pmatrix},
\qquad
\tr M=0,\qquad \norm{M}\le a.
\]
Then there exist $B,C\in M_{2m}(\C)$ such that
\[
M=\comm BC,\qquad \norm{B}\norm{C}\le\Pi(a).
\]
\end{lemma}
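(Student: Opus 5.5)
The plan is to reduce $M$ to a block-upper-triangular form by a similarity with controlled norm. Then we write that form as a single commutator, using the two unitaries supplied by Corollary~\ref{cor:sumtwo} as diagonal blocks of one factor. The polynomial $\Pi(a)$ should come out as the product of the similarity's condition number and the norms of the two factors.

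\emph{Step 1: a controlled similarity.} Conjugating $M$ by $E_X=\begin{pmatrix}I&0\\X&I\end{pmatrix}$, whose inverse is $E_{-X}$ and whose norm is at most $1+\norm X$, gives
\[
E_XME_X^{-1}=\begin{pmatrix}A-I\cdot X\cdots\end{pmatrix}.
\]
Its upper-right block is still $I_m$. The lower-left block is a quadratic (Riccati) expression of the form $F+XA-DX-X^2$, up to signs that I will fix in the computation. Rather than solving the Riccati equation exactly, which need not be possible with small $\norm X$, I will use a fixed-point formulation. First apply an elementary similarity $\diag(tI,I)$ and a shift $\diag(I,I)$-type correction so that the upper-right block becomes a large multiple $\mu I$. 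The map $X\mapsto \mu^{-1}(F+XA-DX-X^2)$ is then a contraction on the ball $\norm X\le 2a/\mu$ once $\mu$ is a fixed multiple of $a$, say $\mu\approx 5a+2$. Banach's fixed point theorem then produces $X$ with the lower-left block killed, or at least reduced to a form handled in Step~2. All conditioning costs are polynomial in $a$, of degree about $4$, matching the factor $(5a+2)^4$.

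\emph{Step 2: commutator for the transformed matrix.} Write the transformed matrix as $N=\begin{pmatrix}A'&\mu I\\F'&D'\end{pmatrix}$, with norms bounded by polynomials in $a$ and $\tr A'+\tr D'=0$. Remove the traces by setting $A_0=A'-\tfrac{\tr A'}{m}I$ and $D_0=D'+\tfrac{\tr A'}{m}I$. The scalar part $\tfrac{\tr A'}{m}\diag(I,-I)$ is itself a commutator with $\begin{pmatrix}0&I\\0&0\end{pmatrix}$-type nilpotents, so it can be absorbed together with the identity block. By Corollary~\ref{cor:sumtwo}, write
\[
A_0=\comm{U}{V}+\comm{K}{T}
\]
with $U,K$ unitary. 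Try $B=\begin{pmatrix}U&0\\0&K'\end{pmatrix}+\text{(nilpotent part)}$ and solve $\comm BC=N$ for $C$ blockwise. The diagonal equations are solved by the factors from the corollary. The off-diagonal equations have the shape $UC_{12}-C_{12}K'=\text{given}$ plus coupling terms. These are solved by a second fixed-point/Neumann-series argument: scaling the nilpotent part of $B$ by a large parameter makes the coupling a contraction. This is where the quadratic and cubic factors $70a^2+\dots$ and $140a^3+\dots$ should arise.

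\emph{Step 3: undo the similarities.} Conjugate back via $B\mapsto E_X^{-1}BE_X$ and $C\mapsto E_X^{-1}CE_X$, and multiply the norm bounds.

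The main obstacle I expect is Step~2. The two commutators in the corollary must be merged into one, and the natural Sylvester-type off-diagonal equations are not uniformly solvable because $U$ and $K$ may share spectrum. What I would try first is to place $U$ and $K$ in the two diagonal blocks, and to use the identity block to transport $\comm KT$ from the $(1,1)$ corner to the $(2,2)$ corner. The residual off-diagonal errors should then be of size $O(\mu^{-1})$, which is small enough for the contraction to close.
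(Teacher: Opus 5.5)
\paragraph{Step~1 does not work as described.}
A block-diagonal similarity cannot create the large linear term your contraction needs. After conjugating so that the upper-right block is $\mu I$, the lower-left block becomes $\mu^{-1}F$. The shear $E_X$ then removes it exactly when $\mu^{-1}F+XA-DX-\mu X^2=0$. Under $Y=\mu X$ this is the original Riccati equation $F+YA-DY-Y^2=0$.

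That equation need not have any solution. Take $m=2$, $A=D=0$ and $F=\bigl(\begin{smallmatrix}0&1\\0&0\end{smallmatrix}\bigr)$, so $\tr M=0$ and $\norm M=1$; the equation then asks for a square root of a nilpotent Jordan block, which does not exist.

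The fixed points of your map $X\mapsto\mu^{-1}(F+XA-DX-X^2)$ solve $\mu X=F+XA-DX-X^2$. That is not the condition for any of your similarities to clear the lower-left block. The ``shift'' you mention is not a similarity either: adding a block-diagonal scalar term changes $M$. That term would then have to be written as a commutator and merged with the rest, which is the original difficulty.

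\paragraph{Step~2 is left unspecified where you expect trouble.} $B$ and $C$ are never written down. Only $A_0$ is decomposed, while the trace-zero $(2,2)$ block is left without a representation. And $\Pi(a)$ is not derived.

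\paragraph{The missing idea.} Solve for the similarity and the commutator simultaneously, with the large parameter placed inside a commutator factor rather than in a similarity. The paper decomposes $A+D=\comm UV+\comm KT$ once, sets $D_0=\comm UV-A$ and $\rho=70a^2+62a+6$, and takes
\[
B_c=\begin{pmatrix}\rho I+U&I\\K&0\end{pmatrix},\qquad
C_c=\begin{pmatrix}V&T\\Q-D_0+TK&V+I-(\rho I+U)T\end{pmatrix},\qquad
S=\begin{pmatrix}I&0\\Q&I\end{pmatrix}.
\]
The identity block of $B_c$ moves $Q-D_0$ between the two diagonal corners. The entry $K$ in position $(2,1)$ puts $\comm KT$ into the $(2,2)$ corner. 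Hence the $(1,1)$, $(1,2)$ and $(2,2)$ blocks of $\comm{B_c}{C_c}$ and $S^{-1}MS$ agree for every $Q$.

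In the $(2,1)$ block, the term $-Q(\rho I+U)$ supplies the dominant linear term. Matching reduces to $\rho(Q-D_0)=Q^2-DQ+Q(A-U)+D_1$ for an explicit $D_1$ with $\norm{D_1}\le14a+1$, and this is a contraction on the ball $\norm{Q-D_0}\le1$.

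So the shear never triangularizes $M$; it only makes $S^{-1}MS$ equal to an explicit commutator. The factor $(5a+2)^4$ in $\Pi(a)$ is $(\norm S\norm{S^{-1}})^2$, using $\norm Q\le5a+1$.
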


\begin{proof}
Since $\tr(A+D)=0$ and $\norm{A+D}\le2a$,
Corollary~\ref{cor:sumtwo} gives fixed factors satisfying
\[
A+D=\comm UV+\comm KT,\qquad
\norm{U}=\norm{K}=1,\qquad
\norm{V},\norm{T}\le2a.
\]
Put $\rho=70a^2+62a+6$ and define
\begin{equation}\label{eq:defects}
\begin{aligned}
\widetilde U&=\rho I+U,
& D_0&=\comm UV-A,\\
D_1&=\comm KV+D_0U-K+\comm U{TK}-F.
\end{aligned}
\end{equation}
The block norm bounds give
\[
\norm{D_0}\le5a,\qquad
\norm{D_1}\le4a+5a+1+4a+a=14a+1.
\]

For a matrix $Q$ to be chosen, set
\begin{equation}\label{eq:companions}
\begin{gathered}
B_c=\begin{pmatrix}\widetilde U&I\\K&0\end{pmatrix},
\qquad
S=\begin{pmatrix}I&0\\Q&I\end{pmatrix},\\
C_c=
\begin{pmatrix}
V&T\\
Q-D_0+TK&V+I-\widetilde U T
\end{pmatrix}.
\end{gathered}
\end{equation}
Direct block multiplication gives
\begin{equation}\label{eq:fourblocks}
\begin{aligned}
\comm{B_c}{C_c}
&=
\begin{pmatrix}
A+Q&I\\
F+\rho D_0+D_1-Q\widetilde U&D-Q
\end{pmatrix},\\
S^{-1}MS
&=
\begin{pmatrix}
A+Q&I\\
F+DQ-QA-Q^2&D-Q
\end{pmatrix}.
\end{aligned}
\end{equation}
Thus the two matrices agree precisely when
\[
\rho(Q-D_0)=Q^2-DQ+Q(A-U)+D_1.
\]
We solve this equation by finding a fixed point of
\begin{equation}\label{eq:fixedpoint}
\Psi(Q)=D_0+\frac1{\rho}
\bigl(Q^2-DQ+Q(A-U)+D_1\bigr)
\end{equation}
in the closed ball $\norm{Q-D_0}\le1$.

On this ball, $\norm{Q}\le5a+1$, and hence
\[
\norm{\Psi(Q)-D_0}
\le
\frac{(5a+1)^2+(2a+1)(5a+1)+14a+1}{\rho}
=\frac12.
\]
For two matrices $Q,Q'$ in the ball, the identity
\[
Q^2-Q'^2=(Q-Q')Q+Q'(Q-Q')
\]
also gives
\[
\norm{\Psi(Q)-\Psi(Q')}
\le\frac{12a+3}{\rho}\norm{Q-Q'}
\le\frac12\norm{Q-Q'}.
\]
The Banach fixed-point theorem therefore supplies a solution
with $\norm{Q-D_0}\le1$. Equation~\eqref{eq:fourblocks}
then yields
\begin{equation}\label{eq:3_6}
M=\comm{SB_cS^{-1}}{SC_cS^{-1}}.
\end{equation}

It remains to estimate the factors. Summing block norms,
we obtain
\[
\norm{B_c}\le\rho+3.
\]
The lower-left block of $C_c$ has norm at most $1+2a$,
and its lower-right block has norm at most
$1+2a(\rho+2)$. Therefore
\[
\norm{C_c}
\le2a+2a+(1+2a)+(1+2a(\rho+2))
=2a(\rho+5)+2.
\]
Finally,
\[
\norm{S},\norm{S^{-1}}
\le1+\norm{Q}\le5a+2.
\]
The norm product in \eqref{eq:3_6} is consequently at most
\[
(5a+2)^4(\rho+3)\bigl(2a(\rho+5)+2\bigr)=\Pi(a),
\]
as required.
\end{proof}

\subsection{Incorporating the remaining blocks}
\label{sec:shears}

We now use similarities that preserve the identity block
to make all remaining diagonal blocks zero. The first two
blocks then form a trace-zero matrix to which
Lemma~\ref{prop:core} applies. The off-diagonal blocks are
recovered using the following elementary estimate.

\begin{lemma}
\label{lem:sylvester}
Let $U$ and $V$ act on possibly different Hilbert spaces,
and let $z,w\in\C$. If
$\norm{U}+\norm{V}<|z-w|$, then, for every
rectangular matrix $Y$, the equation
\[
(z-w)X+UX-XV=Y
\]
has a solution satisfying
\[
\norm{X}\le\frac{\norm{Y}}{|z-w|-\norm{U}-\norm{V}}.
\]
\end{lemma}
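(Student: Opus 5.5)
The plan is to treat the equation as a fixed-point problem and solve it with the Banach fixed-point theorem, in the same way the proof of Lemma~\ref{prop:core} handled its quadratic equation. Set $\delta=z-w$, so $|\delta|>\norm U+\norm V$. Since $\delta\neq0$, the equation $\delta X+UX-XV=Y$ can be rewritten as
\[
X=\Phi(X),\qquad \Phi(X)=\frac{1}{\delta}\bigl(Y-UX+XV\bigr),
\]
with $X$ ranging over rectangular matrices between the two spaces. This space is finite-dimensional and complete under the operator norm.

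\textbf{Contraction.} For any two matrices $X,X'$ we have
\[
\norm{\Phi(X)-\Phi(X')}
=\frac{1}{|\delta|}\norm{U(X'-X)-(X'-X)V}
\le\frac{\norm U+\norm V}{|\delta|}\norm{X-X'}.
\]
Write $q=(\norm U+\norm V)/|\delta|$. The hypothesis gives $q<1$, so $\Phi$ is a contraction on the whole space, and the Banach fixed-point theorem produces a unique solution $X$.

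\textbf{Norm bound.} For the fixed point, $\delta X=Y-UX+XV$. Taking norms gives
\[
|\delta|\norm X\le\norm Y+(\norm U+\norm V)\norm X.
\]
Rearranging yields $\norm X\le\norm Y/(|\delta|-\norm U-\norm V)$, which is the required estimate.

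Alternatively, one can write the solution as the Neumann series $X=\sum_{k\ge0}\delta^{-1}(-\delta^{-1}\mathcal L)^k Y$, where $\mathcal L(X)=UX-XV$ and $\norm{\mathcal L}\le\norm U+\norm V$. Summing the geometric series gives the same bound.

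No step is a real obstacle. The only point needing care is that the operator $X\mapsto UX-XV$ on rectangular matrices has operator-norm bound $\norm U+\norm V$. This follows from submultiplicativity of the operator norm for rectangular compositions. It also covers the degenerate case where one of the spaces is zero-dimensional, in which the statement is trivial.
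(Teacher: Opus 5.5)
Your proposal is correct and is essentially the paper's argument. The paper inverts $(z-w)I+\mathcal E$, with $\mathcal E(X)=UX-XV$ and $\norm{\mathcal E}\le\norm U+\norm V<|z-w|$, by a Neumann series; iterating your contraction $\Phi$ from $X=0$ produces exactly those partial sums, and reading the bound off the fixed-point equation instead of summing the geometric series gives the same constant.
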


\begin{proof}
Put $\eta=\norm{U}+\norm{V}$.
On the space of rectangular matrices with its operator norm,
the map $\mathcal E(X)=UX-XV$ has norm at most $\eta$.
The inverse of $(z-w)I+\mathcal E$ is given by the
norm-convergent Neumann series
\[
\frac1{z-w}\sum_{j=0}^{\infty}
\left(-\frac{\mathcal E}{z-w}\right)^j.
\]
Its norm is at most $(|z-w|-\eta)^{-1}$.
\end{proof}

The Sylvester estimate bounds each off-diagonal block separately.
The next lemma combines these bounds into an estimate for the whole
operator, independently of the block dimensions.

\begin{lemma}\label{lem:offnorm}
Let $T=(T_{ij})$ act on an orthogonal direct sum of $k\ge1$ Hilbert
spaces, and suppose $T_{ii}=0$ for every $i$.
If $c\ge0$ and $\norm{T_{ij}}\le c$ whenever $i\ne j$, then
\begin{equation}\label{eq:offnorm}
\norm{T}\le c(k-1).
\end{equation}
\end{lemma}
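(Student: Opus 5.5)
Fix unit vectors $x=(x_1,\ldots,x_k)$ and $y=(y_1,\ldots,y_k)$ in the direct sum, with $x_j,y_i$ in the $j$th and $i$th summands. It suffices to bound $|\langle Tx,y\rangle|$ by $c(k-1)$, since $\norm{T}$ is the supremum of this quantity over such pairs. Because the diagonal blocks vanish, we can expand
\[
|\langle Tx,y\rangle|
=\Bigl|\sum_{i\ne j}\langle T_{ij}x_j,y_i\rangle\Bigr|
\le c\sum_{i\ne j}\norm{x_j}\norm{y_i}.
\]
This reduces the problem to a scalar inequality for the nonnegative vectors $a=(\norm{x_j})_j$ and $b=(\norm{y_i})_i$ in $\R^k$. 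Both satisfy $\norm{a}_2=\norm{b}_2=1$.

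Next we write $\sum_{i\ne j}a_jb_i=\langle (J-I)a,b\rangle$, where $J$ is the all-ones $k\times k$ matrix. The matrix $J-I$ is real symmetric with eigenvalues $k-1$ (once) and $-1$ (with multiplicity $k-1$). Hence its operator norm is $\max(k-1,1)$ when $k\ge2$, and it is $0$ when $k=1$. In every case this is at most $k-1$ for $k\ge2$. By Cauchy--Schwarz, $\langle (J-I)a,b\rangle\le\norm{J-I}\le k-1$.

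The case $k=1$ needs no separate argument. There $T=T_{11}=0$, so $\norm{T}=0=c(k-1)$. Combining the steps gives \eqref{eq:offnorm}.

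There is no serious obstacle in this argument. The one step needing care is the spectral computation for $J-I$. If we wanted to avoid it, a cruder but still sufficient route would be Schur's test: $J-I$ has row and column sums equal to $k-1$, which directly gives $\norm{J-I}\le k-1$. The bound is independent of the dimensions of the summands, because only the block norms enter the argument.
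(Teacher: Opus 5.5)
Your proof is correct. It differs from the paper's only in how the scalar estimate is obtained.

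The paper bounds $\norm{Tx}$ directly. For each block row it uses $\norm{(Tx)_i}\le c\sum_{j\ne i}\norm{x_j}$, applies Cauchy--Schwarz to get $\bigl(\sum_{j\ne i}\norm{x_j}\bigr)^2\le(k-1)\sum_{j\ne i}\norm{x_j}^2$, and sums over $i$, using that each $j$ occurs in exactly $k-1$ rows. This is precisely the Schur test for the $0/1$ matrix $J-I$, written out inline. So your fallback remark is in fact the paper's argument. It is also not cruder: it gives the same constant $k-1$, which is the exact norm of $J-I$ for $k\ge2$.

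Your main route instead uses the bilinear form to obtain the comparison $\norm{T}\le\norm{(\norm{T_{ij}})_{i,j}}$, and then computes the spectrum of $J-I$. What this buys is a reusable principle: a block operator is dominated by its scalar matrix of block norms. It also shows the bound is sharp at the level of that majorant. The cost is a spectral computation that the paper's one-line Cauchy--Schwarz step avoids.

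One small wording point: the sentence ``In every case this is at most $k-1$ for $k\ge2$'' is muddled. For $k\ge2$ the norm of $J-I$ equals $k-1$, and for $k=1$ the sum over $i\ne j$ is empty, as you note afterwards.
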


\begin{proof}
The case $k=1$ is immediate. For $k\ge2$ and $x=(x_j)$,
the triangle inequality and Cauchy--Schwarz give
\[
\norm{Tx}^2
\le c^2(k-1)\sum_i\sum_{j\ne i}\norm{x_j}^2
=c^2(k-1)^2\norm{x}^2.
\]
Taking square roots proves the estimate.
\end{proof}

We can now eliminate the remaining diagonal blocks by similarities and
recover the off-diagonal blocks with the two preceding estimates.
For an invertible matrix $S$, write
$\chi(S)=\norm{S}\norm{S^{-1}}$; conjugating both commutator factors
by $S$ multiplies their norm product by at most $\chi(S)^2$.
The norm growth of each elimination step is bounded by the function
\begin{equation}\label{eq:iterate}
f(x)=x(x+2)^2\bigl(1+x(x+2)^2\bigr)^2\qquad(x\ge1).
\end{equation}

\begin{proposition}\label{prop:absorb}
Let a trace-zero matrix $A$ have an orthogonal block
decomposition of sizes $r,r,s_3,\ldots,s_b$, where
$b\ge2$, $r\ge1$, and $1\le s_j\le r$.
Suppose $A_{12}=I_r$ and $\norm{A}\le a$, with $a\ge1$.
Then there are matrices $B,C$ of the same size as $A$
such that $A=\comm BC$ and
\begin{equation}\label{eq:absorb}
\norm{B}\norm{C}
\le
\left(\frac{f^{\circ(b-2)}(a)}{a}\right)^2
(b-1)\Bigl(4\Pi\bigl(f^{\circ(b-2)}(a)\bigr)
+2(b-1)f^{\circ(b-2)}(a)\Bigr),
\end{equation}
where $f^{\circ(m)}$ denotes the composition of $f$ with itself
$m$ times, with $f^{\circ(0)}=\mathrm{id}$.
\end{proposition}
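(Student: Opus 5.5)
We argue by induction on $b$, using similarities that preserve the identity block $A_{12}=I_r$ to clear the diagonal blocks $A_{jj}$ for $j\ge3$. Each clearing step costs a factor $f$ in the norm bound. Once only the first two diagonal blocks remain nonzero, we apply Lemma~\ref{prop:core} to the leading $2r\times2r$ corner. We then recover the off-diagonal blocks involving indices $\ge3$ through a second commutator built from a block-diagonal $B$ with well-separated scalar entries, solving each block equation by Lemma~\ref{lem:sylvester} and assembling the norms by Lemma~\ref{lem:offnorm}.

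\emph{Clearing step.} Take the last block $j=b$, of size $s=s_b\le r$, and write $\iota\colon \C^{s}\to\C^r$ for the inclusion of the first $s$ coordinates. A block shear that adds a multiple of block $1$ to block $b$, namely $E_{b1}=X$ with $X=A_{bb}\iota^*$ or a similar choice, changes the $(b,2)$ entry by $X A_{12}=X$. We combine it with a shear $E_{2b}$ that uses the identity block to cancel. Concretely, with $S=I+\iota$-type shears between blocks $2$ and $b$, the new $(b,b)$ block becomes $A_{bb}-Y\,A_{12}\,Z$ for suitable $Y,Z$ of norm $\le\norm{A}+1$, and it can be made $0$ while leaving $A_{12}=I_r$ unchanged. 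The shears have $\norm{S},\norm{S^{-1}}\le 1+x(x+2)^2$ or similar, with $x$ the current norm bound. The new matrix then has norm at most $\chi(S)\cdot x\le f(x)$, which explains the shape $x(x+2)^2(1+x(x+2)^2)^2$. The point is that $\chi(S)^2$ appears only in the norm bound, because the final factors are conjugated back by the accumulated similarity. The prefactor $(f^{\circ(b-2)}(a)/a)^2$ in \eqref{eq:absorb} plays the role of the accumulated $\chi^2$. I expect this step to be the main obstacle: choosing the shear so that it kills $A_{bb}$ exactly, keeps $A_{12}=I$ and keeps the earlier cleared blocks zero, with $\chi$ controlled by an expression matching $f$. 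I would first try to derive the shear from the $2\times2$ identity $\begin{pmatrix}I&0\\ N&I\end{pmatrix}$ acting on the pair (block $1$, block $b$), followed by a compensating shear on (block $b$, block $2$).

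\emph{Final assembly.} After $b-2$ steps we have a matrix $A'$ with $\norm{A'}\le x:=f^{\circ(b-2)}(a)$, $A'_{12}=I$, and $A'_{jj}=0$ for $j\ge3$. Since the trace is preserved, the $2\times2$ corner $M$ has trace zero and norm $\le x$. Write $A'=M\oplus0+R$, where $R$ has zero diagonal blocks and contains all blocks involving an index $\ge3$. Lemma~\ref{prop:core} gives $M=\comm{B_1}{C_1}$, normalized so that $\norm{B_1}=\norm{C_1}=\sqrt{\Pi(x)}$. For $R$, put $B_2=\diag(z_1 I,\ldots,z_b I)$ with $z_1=z_2=0$ and $z_j$ spaced so that $|z_i-z_j|\ge1$ for $i\ne j$ unless both lie in $\{1,2\}$, for instance $z_j=j$. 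Solving $(z_i-z_j)X_{ij}=R_{ij}$ gives $\norm{X_{ij}}\le x$, hence $\norm{C_2}\le(b-1)x$ by Lemma~\ref{lem:offnorm}, while $\norm{B_2}\le b-1$ after shifting.

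To merge the two commutators into one, take $B=B_1\oplus0+\lambda B_2$-type factors with $\lambda$ large. The cross terms are then handled by Lemma~\ref{lem:sylvester} with $U=B_1$: choosing $|z-w|\ge2\norm{B_1}$ absorbs $\norm{U}$ and at most doubles the bound, which produces the factors $4\Pi$ and $2(b-1)x$. Scaling $B,C$ to balance the norms yields the product $(b-1)\bigl(4\Pi(x)+2(b-1)x\bigr)$. Conjugating back by the accumulated similarity multiplies this by $(x/a)^2$, which gives \eqref{eq:absorb}.
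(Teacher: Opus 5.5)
\textbf{There is a genuine gap in the clearing step.} Your overall architecture matches the paper's. You clear the diagonal blocks $j\ge3$ by shears that preserve $A_{12}=I_r$, apply Lemma~\ref{prop:core} to the merged leading $2r\times2r$ corner, and then take $B=\diag(B_0,I,2I,\ldots)$ with $B_0$ rescaled to norm $1/4$. The off-diagonal blocks of $C$ are solved by Lemma~\ref{lem:sylvester} and summed by Lemma~\ref{lem:offnorm}. Once your normalizations are fixed (rescale so that $\norm{B_1}=1/4$ and use unit spacing of the centers), that final assembly is fine.

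The clearing step, however, you flag as the main obstacle and never carry out, and the one concrete choice you write down fails. Write $E_{ij}(X)$ for the block matrix with $X$ in position $(i,j)$. Conjugating by $I+E_{b1}(X)$ replaces $A_{bb}$ by $A_{bb}-XA_{1b}$. With $X=A_{bb}\iota^*$ this gives $A_{bb}(I-\iota^*A_{1b})$, which is nonzero in general, e.g.\ if $A_{1b}=0$. Your formula ``$A_{bb}-YA_{12}Z$'' drops the terms $A_{b2}Z$ and $YA_{1b}$. The claimed bound $\norm{Y}\le\norm{A}+1$ is also not what occurs. Consequently the factor $f$ and the telescoped prefactor $(f^{\circ(b-2)}(a)/a)^2$ are read off from the statement rather than derived.

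\textbf{The missing idea} is to use the identity block first to normalize the coupling block $A_{1b}$, and only then kill $A_{bb}$. Fix an isometry $J\colon\C^{s_b}\to\C^r$.
\begin{itemize}
\item Conjugate by $S_1=I+E_{2b}(J-A_{1b})$. This gives $A'_{1b}=A_{1b}+A_{12}(J-A_{1b})=J$, keeps $A'_{12}=I$, and alters only diagonal blocks $2$ and $b$.
\item Conjugate next by $S_2=I+E_{b1}(A'_{bb}J^*)$. This gives $A''_{bb}=A'_{bb}-A'_{bb}J^*J=0$, keeps $A''_{12}=I$, and alters only diagonal blocks $1$ and $b$, so previously cleared blocks stay zero.
\end{itemize}
With current norm bound $x$, one has $\norm{J-A_{1b}}\le x+1$, hence $\chi(S_1)\le(x+2)^2$ and $\norm{A'}\le x(x+2)^2$. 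Then $\norm{A'_{bb}J^*}\le x(x+2)^2$, hence $\chi(S_2)\le(1+x(x+2)^2)^2$ and $\norm{A''}\le f(x)$. Each step therefore has condition number at most $f(x)/x$. Since $f$ is nondecreasing with $f(x)\ge x$, these ratios telescope to $\chi(S)\le f^{\circ(b-2)}(a)/a$.

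Your preferred order (a shear on blocks $1,b$ followed by one on blocks $b,2$) can also be made to work, but only in the mirrored form. First use $I+E_{b1}(A_{b2}-J^*)$ to make the $(b,2)$ block equal to $J^*$, then $I+E_{2b}(-JA'_{bb})$ to kill the $(b,b)$ block. In either version the first shear must exploit $A_{12}=I$ to turn a coupling block into a (co)isometry, rather than attack $A_{bb}$ directly.
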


\begin{proof}
For this proof, put $\widehat a=f^{\circ(b-2)}(a)$.
Write $E_{ij}(X)$ for the matrix with $X$ in block
$(i,j)$ and zero elsewhere. For $i\ne j$,
\[
(I+E_{ij}(X))^{-1}=I-E_{ij}(X).
\]
These elementary similarities will eliminate the diagonal
blocks indexed by $j\ge3$.

Fix such a $j$ and an isometric embedding
$J:\C^{s_j}\to\C^r$, so that $J^*J=I$.
First set
\[
X=J-A_{1j},\qquad
S_1=I+E_{2j}(X),\qquad
A'=S_1^{-1}AS_1.
\]
Direct multiplication gives
\[
A'_{1j}=A_{1j}+A_{12}X=J,\qquad A'_{12}=I.
\]
Only diagonal blocks $2$ and $j$ can change.
Next set
\[
Y=A'_{jj}J^*,\qquad
S_2=I+E_{j1}(Y),\qquad
A''=S_2^{-1}A'S_2.
\]
This time only diagonal blocks $1$ and $j$ can change,
and
\begin{equation}\label{eq:shearzero}
A''_{jj}=A'_{jj}-YA'_{1j}=0,\qquad A''_{12}=I.
\end{equation}
Thus these two similarities eliminate block $j$ while
preserving the identity block and every previously
eliminated diagonal block.

We next bound the accumulated change of basis.
If the current matrix has norm at most $x\ge1$, then
$\norm{X}\le x+1$, so
\[
\chi(S_1)\le(x+2)^2,\qquad
\norm{A'}\le x(x+2)^2.
\]
Since $\norm{Y}\le x(x+2)^2$, we also have
\[
\chi(S_2)\le\bigl(1+x(x+2)^2\bigr)^2,\qquad
\norm{A''}\le f(x).
\]
A pair of these similarities therefore replaces the norm
bound $x$ by $f(x)$ and has condition number at most
$f(x)/x$.

The function $f$ is nondecreasing on $[1,\infty)$ and
satisfies $f(x)\ge x$. Applying the construction for
$j=3,\ldots,b$ and multiplying the successive condition
number bounds gives
\begin{equation}\label{eq:telescope}
\widehat A=S^{-1}AS,\qquad
\norm{\widehat A}\le\widehat a,\qquad
\chi(S)\le\widehat a/a.
\end{equation}
When $b=2$, no similarities are needed: take $S=I$.

Merge the first two blocks of $\widehat A$ into a single
block $M$. All other diagonal blocks are zero, so
$\tr M=0$. Its upper-right block is still the identity,
and $\norm{M}\le\widehat a$.
Lemma~\ref{prop:core} gives
\[
M=\comm{B_0}{C_0},
\qquad
\norm{B_0}\norm{C_0}\le\Pi(\widehat a).
\]
Since $M\ne0$, we may rescale the factors inversely so that
\[
\norm{B_0}=\frac14,\qquad
\norm{C_0}\le4\Pi(\widehat a).
\]

There are now $k=b-1$ blocks. Set
\[
\widehat B=\diag(B_0,I,2I,\ldots,(k-1)I),
\qquad
\widehat C_{11}=C_0,\quad
\widehat C_{jj}=0\quad(j>1).
\]
For $k=1$, these definitions simply give
$\widehat B=B_0$ and $\widehat C=C_0$.
For each pair $i\ne j$, choose $\widehat C_{ij}$ to solve
\[
\widehat B_{ii}\widehat C_{ij}
-\widehat C_{ij}\widehat B_{jj}
=\widehat A_{ij}.
\]
Lemma~\ref{lem:sylvester} applies with scalar centers
$0,1,\ldots,k-1$. Their separation is at least one,
and the only non-scalar perturbation is $B_0$,
whose norm is $1/4$. Thus
\[
\norm{\widehat C_{ij}}\le2\widehat a.
\]
By Lemma~\ref{lem:offnorm} and the block diagonal norm formula,
\[
\norm{\widehat B}\le k,\qquad
\norm{\widehat C}
\le4\Pi(\widehat a)+2(k-1)\widehat a.
\]
The diagonal and off-diagonal block equations now give
$\comm{\widehat B}{\widehat C}=\widehat A$.

Finally, conjugating both factors by $S$ gives a
commutator representation of $A$ and multiplies the
norm product by at most $\chi(S)^2$.
Equation~\eqref{eq:telescope} therefore yields
\eqref{eq:absorb}.
\end{proof}

\section{Rotated Hermitian parts and proof of the main theorem}
\label{sec:dichotomy}

For $A\in M_n(\C)$, we fix $0<t\le1$ and divide the proof according to
whether $A\in\mathcal T_n(t)$, the set introduced below. In this set, we construct a
commutator representation directly; otherwise, we obtain smaller
trace-zero compressions and apply induction. The compression argument
uses the following theorem of Marcus, Spielman, and
Srivastava~\cite[Theorem~1.4]{marcus2015interlacing}.

\begin{theorem}
\label{thm:MSS}
Let $x_1,\ldots,x_M$ be independent, finitely supported
random vectors in a finite-dimensional complex Hilbert space.
Suppose that, for some $\varepsilon>0$,
\begin{equation}\label{eq:MSSassumptions}
\sum_i\mathbb E x_ix_i^*=I,\qquad
\mathbb E\norm{x_i}^2\le\varepsilon.
\end{equation}
Then some outcome satisfies
\begin{equation}\label{eq:MSSconclusion}
\norm{\sum_i x_ix_i^*}\le(1+\sqrt{\varepsilon})^2.
\end{equation}
\end{theorem}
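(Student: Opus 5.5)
The plan is to follow the interlacing-families method of Marcus, Spielman, and Srivastava. The proof has three parts: an algebraic identity for expected characteristic polynomials, a real-rootedness/interlacing argument reducing the statement to a bound on one deterministic polynomial, and a barrier-function estimate for that polynomial's largest root. Write $d$ for the dimension and $A_i=\mathbb E x_ix_i^*$, so each $A_i$ is positive semidefinite, $\sum_iA_i=I$ and $\tr A_i=\mathbb E\norm{x_i}^2\le\varepsilon$. Since $\sum_ix_ix_i^*$ is positive semidefinite, its norm is its largest eigenvalue, i.e.\ the largest root of its characteristic polynomial.

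\textbf{Step 1 (mixed characteristic polynomial).} First show by induction on $M$ that
\[
\mathbb E\det\Bigl(xI-\sum_i x_ix_i^*\Bigr)
=\prod_{i=1}^M\bigl(1-\partial_{z_i}\bigr)\det\Bigl(xI+\sum_i z_iA_i\Bigr)\Big|_{z=0}
=:\mu(A_1,\dots,A_M)(x).
\]
The key input is the rank-one identity $\det(Y-vv^*)=(1-\partial_t)\det(Y+tvv^*)|_{t=0}$, which follows from the matrix determinant lemma. Because this expression is affine in $vv^*$, independence lets us take expectations one vector at a time. The same identity holds when some $x_i$ are fixed outcomes and the rest are random.

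\textbf{Step 2 (interlacing family).} The polynomial $\det(xI+\sum_iz_iA_i)$ is real stable, since the $A_i$ are positive semidefinite and Hermitian in $\C^d$. Real stability is preserved by the operators $1-\partial_{z_i}$ and by specializing real variables, so every conditional expectation polynomial
\[
p_{v_1,\dots,v_k}(x)=\mathbb E\Bigl[\det\Bigl(xI-\sum_ix_ix_i^*\Bigr)\,\Big|\,x_1=v_1,\dots,x_k=v_k\Bigr]
\]
is real-rooted. Indeed, it is again a mixed characteristic polynomial, with rank-one matrices in the fixed slots.

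For each $k$, the polynomials $p_{v_1,\dots,v_{k-1},v}$ over the support of $x_k$ have the property that every convex combination is real-rooted. Each such combination is itself a mixed characteristic polynomial, obtained by replacing the law of $x_k$. By the standard lemma (real-rootedness of all convex combinations implies a common interlacer), these polynomials therefore have a common interlacer. Consequently some child has largest root at most that of the parent. Descending the finite tree of outcomes produces an outcome whose largest root is at most $\mathrm{maxroot}\,\mu(A_1,\dots,A_M)$.

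\textbf{Step 3 (barrier bound), the main obstacle.} It remains to show $\mathrm{maxroot}\,\mu\le(1+\sqrt\varepsilon)^2$. Set $P(y)=\det(\sum_iy_iA_i)$, so that $\mu(x)=\prod_i(1-\partial_{y_i})P(y)|_{y=x\mathbf 1}$ because $\sum_iA_i=I$. Use the barrier functions $\Phi^i_P=\partial_{y_i}\log P$. At the starting point $y=t\mathbf 1$ we have $\Phi^i=\tr A_i/t\le\varepsilon/t$.

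The crucial lemma is the following. If $y$ lies above the roots of $P$ and $\Phi^j_P(y)\le 1-1/\delta$, then $y$ lies above the roots of $(1-\partial_j)P$, and all barriers satisfy $\Phi^i_{(1-\partial_j)P}(y+\delta e_j)\le\Phi^i_P(y)$. Its proof uses monotonicity and convexity of the barriers in each coordinate direction, which comes from real stability via the two-variable restriction (Helton--Vinnikov, or a direct determinantal argument here since $P$ is a determinant). Applying the lemma $M$ times shows that $(t+\delta)\mathbf 1$ lies above the roots of $\mu$ whenever $\varepsilon/t+1/\delta\le1$. Optimizing with $t=\varepsilon+\sqrt\varepsilon$ and $\delta=1+\sqrt\varepsilon$ gives exactly $(1+\sqrt\varepsilon)^2$.

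I expect this convexity and monotonicity step for the barrier to be the hardest part. The concrete route I would try first is to prove it directly for determinantal polynomials with positive semidefinite coefficients, avoiding Helton--Vinnikov.
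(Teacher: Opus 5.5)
The paper does not prove Theorem~\ref{thm:MSS}. It imports the result from \cite[Theorem~1.4]{marcus2015interlacing}, formalizes it separately, and your outline is the original Marcus--Spielman--Srivastava argument. Steps~1 and~2 are correct as written. In Step~3, the initial barrier value $\tr A_i/t$, the condition $\varepsilon/t+1/\delta\le1$, and the choice $t=\varepsilon+\sqrt\varepsilon$, $\delta=1+\sqrt\varepsilon$ all check out.

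The route you say you would try first for the barrier lemma would not work. The lemma is applied $M$ times, at step $j$ to $p_j=\prod_{i<j}(1-\partial_{y_i})P$. Its proof uses monotonicity and convexity of the barriers of $p_j$, not of $P$.

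The determinantal form of $P$ is not inherited. For $A_1=vv^*$ you do get $(1-\partial_{y_1})\det(Y+y_1vv^*)=\det(Y+(y_1-1)vv^*)$. But $A_i=\mathbb E x_ix_i^*$ typically has rank greater than one, and then $(1-\partial_{y_1})P$ has no evident representation $\det(\sum_iy_iB_i+C)$ with $B_i\ge0$. Supplying such representations for bivariate restrictions is exactly what Helton--Vinnikov does in MSS. An argument "for determinantal polynomials with positive semidefinite coefficients" therefore handles only the first step.

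If you want to avoid Helton--Vinnikov, prove the two properties for an arbitrary real stable $p$, which is elementary:
\begin{enumerate}
\item Restrict to the variables $z_i,z_j$, with the others fixed at real values.
\item For $z_j$ fixed in the upper half-plane, the roots $\rho_k$ of $p$ in $z_i$ satisfy $\Ima\rho_k\le0$. So $\sum_k(z_i-\rho_k)^{-1}$ has negative imaginary part when $\Ima z_i>0$.
\item Hence $p+w\,\partial_{z_i}p=p\bigl(1+w\sum_k(z_i-\rho_k)^{-1}\bigr)$ is stable in $(z_i,z_j,w)$.
\item Specializing $z_i=x_i$ real (Hurwitz, using $p(x_i,x_j)\ne0$) shows that $z_j\mapsto-\Phi^i_p(x_i,z_j)$ is a real rational Pick function. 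Therefore
\[
\Phi^i_p(x_i,z_j)=-az_j-b+\sum_k\frac{m_k}{z_j-\mu_k},\qquad a,m_k\ge0,\ b\in\R,
\]
with the poles $\mu_k$ among the real roots of $p(x_i,\cdot)$.
\end{enumerate}
Above the roots this expression is nonincreasing and convex in $z_j$, which is all the barrier lemma needs.
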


The covariance sums arising below may be bounded above
by the identity. The following corollary covers this case.

\begin{corollary}\label{cor:MSS-subidentity}
Under the hypotheses of Theorem~\ref{thm:MSS}, with
the covariance equality replaced by
$\sum_i\mathbb E x_ix_i^*\le I$, the conclusion
\eqref{eq:MSSconclusion} still holds.
\end{corollary}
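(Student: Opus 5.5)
We reduce to the equality case of Theorem~\ref{thm:MSS} by adding deterministic vectors that fill the gap between the covariance sum and the identity. Let $W$ denote the ambient space of dimension $d$. Put $\Sigma=\sum_{i=1}^M\mathbb E x_ix_i^*$, so $0\le\Sigma\le I$, and let $R=I-\Sigma\ge0$. Diagonalize $R=\sum_{k=1}^d\mu_k u_ku_k^*$ with $0\le\mu_k\le1$ and orthonormal $u_k$.

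The difficulty is that appending the vectors $\sqrt{\mu_k}u_k$ directly could violate $\mathbb E\norm{y}^2\le\varepsilon$, since $\mu_k$ may be as large as $1$. To handle this, we split each piece into $N$ equal parts. Fix an integer $N\ge\varepsilon^{-1}$ and append $dN$ new independent random vectors $y_{k,\ell}$, for $1\le k\le d$ and $1\le\ell\le N$. Each $y_{k,\ell}$ is deterministic, equal to $\sqrt{\mu_k/N}\,u_k$. These are finitely supported (constants), and they are independent of everything else. We have $\mathbb E\norm{y_{k,\ell}}^2=\mu_k/N\le1/N\le\varepsilon$. Moreover,
\[
\sum_i\mathbb E x_ix_i^*+\sum_{k,\ell}\mathbb E y_{k,\ell}y_{k,\ell}^*=\Sigma+R=I .
\]

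The enlarged family therefore satisfies \eqref{eq:MSSassumptions} exactly. Theorem~\ref{thm:MSS} then yields an outcome $\omega$ with
\[
\Bigl\|\sum_i x_i(\omega)x_i(\omega)^*+R\Bigr\|\le(1+\sqrt\varepsilon)^2 .
\]

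Finally, we drop the added term. Since $R\ge0$ and $\sum_i x_i(\omega)x_i(\omega)^*\ge0$, we have
\[
0\le\sum_i x_i(\omega)x_i(\omega)^*\le\sum_i x_i(\omega)x_i(\omega)^*+R .
\]
For positive semidefinite operators, $0\le P\le P'$ implies $\norm P\le\norm{P'}$, because $\norm P=\sup_{\norm v=1}\langle Pv,v\rangle$. Hence the same outcome, restricted to the original vectors, satisfies \eqref{eq:MSSconclusion}.

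The only step needing care is the norm condition on the padding vectors, which is exactly why we subdivide them into $N$ pieces rather than appending each $\sqrt{\mu_k}u_k$ as a single vector.
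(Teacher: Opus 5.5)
Your proof is correct and follows the paper's argument. Both pad the deficit $I-\sum_i\mathbb E x_ix_i^*$ with deterministic vectors obtained by splitting its spectral pieces into parts of squared norm at most $\varepsilon$, apply Theorem~\ref{thm:MSS}, and discard the added positive semidefinite term; your choice of $N\ge\varepsilon^{-1}$ equal parts per eigenvector and the monotonicity of the norm on positive semidefinite operators simply make the paper's brief sketch explicit.
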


\begin{proof}
Spectrally decompose the positive semidefinite
difference $I-\sum_i\mathbb E x_ix_i^*$ and split its eigenvalues into
pieces of size at most $\varepsilon$. Adding the corresponding
deterministic vectors gives equality in \eqref{eq:MSSassumptions}
while preserving the individual expected squared-norm bounds.
Apply Theorem~\ref{thm:MSS} and then discard the added positive
semidefinite contributions.
\end{proof}

A compression of $A$ to a subspace $V$ means $P_VA|_V$, where $P_V$
is the orthogonal projection onto $V$; invariance of $V$ is not assumed.
To explain its use, suppose $\norm{A}=1$ and write
\[
H_\theta=\Rea(e^{\ii\theta}A),\qquad
E=(H_\theta^2)^{1/2}.
\]
Since $-E\le H_\theta\le E$, controlling a compression
of $E$ also controls the corresponding compression of
$H_\theta$. For an orthonormal basis $(v_i)$, the vectors
$f_i=E^{1/2}v_i$ satisfy
\[
\sum_i f_if_i^*=E\le I,\qquad
\norm{f_i}^2=v_i^*Ev_i,\qquad
\frac1n\sum_i\norm{f_i}^2=\frac{\tr E}{n}
=\frac{\norm{H_\theta}_1}{n}.
\]
Thus a small trace norm gives a small average squared
vector norm. Below, we choose the basis so that individual
bounds are available as well.

We use a short notation for the matrices whose rotated
Hermitian parts all satisfy a uniform trace-norm lower bound.

\begin{definition}
\label{def:rotated-lower-bound}
For an integer $n\ge1$ and $0<t\le1$, let $\mathcal T_n(t)$ be the
set of matrices $A\in M_n(\C)$ satisfying
\begin{equation}\label{eq:rotated-lower-bound}
\norm{\Rea(e^{\ii\theta}A)}_1
\ge tn\norm{A}
\qquad\text{for every }\theta\in\R.
\end{equation}
We say that such a matrix has a uniform trace-norm
lower bound for rotated Hermitian parts with parameter $t$.
\end{definition}

For fixed $t$ and $A\in M_n(\C)$, either $A\in\mathcal T_n(t)$ or
$\norm{\Rea(e^{\ii\theta}A)}_1<tn\norm{A}$ for some $\theta$.
The proof of the main theorem uses $t=2^{-25}$, independently of
the matrix and its dimension.

\subsection{When the uniform lower bound holds}
\label{sec:high}

The uniform lower bound first yields a disk in the
numerical range of every compression of sufficiently small codimension.
We will use this to construct an invertible off-diagonal block of
proportional rank and apply Section~\ref{sec:riccati}.

\begin{lemma}
\label{lem:disk}
Suppose $0<t\le1$, $\tr A=0$, $\norm{A}=1$, and
$A\in\mathcal T_n(t)$.
If $W$ has codimension
$d\le tn/4$, then the numerical range of
$T=P_WA|_W$ contains the closed disk centered at zero
with radius $t/4$.
\end{lemma}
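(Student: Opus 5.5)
Since the numerical range $W(T)$ is compact and convex (Toeplitz--Hausdorff), it contains the closed disk of radius $t/4$ about zero if and only if its support function in every direction is at least $t/4$. That is, we need
\[
\lambda_{\max}\bigl(\Rea(e^{\ii\theta}T)\bigr)\ge t/4
\qquad\text{for every }\theta\in\R.
\]
Indeed, $\max_{z\in W(T)}\Rea(e^{\ii\theta}z)=\lambda_{\max}(\Rea(e^{\ii\theta}T))$. Moreover, $\Rea(e^{\ii\theta}T)=P_W H_\theta|_W$ with $H_\theta=\Rea(e^{\ii\theta}A)$. So the plan is to fix $\theta$ and show that the compression of $H_\theta$ to $W$ has an eigenvalue at least $t/4$.

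First I will get a lower bound on the positive part of $H_\theta$. Since $\tr H_\theta=\Rea(e^{\ii\theta}\tr A)=0$, the positive and negative eigenvalue sums are equal, so each is $\norm{H_\theta}_1/2\ge tn/2$ by Definition~\ref{def:rotated-lower-bound} with $\norm A=1$. Also $\norm{H_\theta}\le1$, so every eigenvalue lies in $[-1,1]$. Let $N$ be the number of eigenvalues of $H_\theta$ that are at least $t/4$. Eigenvalues below $t/4$ contribute at most $n\cdot t/4$ to the positive sum, and the others contribute at most $1$ each. Hence
\[
tn/2\le N+tn/4,
\qquad\text{so}\qquad N\ge tn/4.
\]

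Next I would use a dimension count, i.e. Cauchy interlacing for compressions. Let $E_+$ be the span of the eigenvectors with eigenvalues at least $t/4$, so $\dim E_+=N$. Since $\codim W=d$, we get $\dim(E_+\cap W)\ge N-d$. If $N-d\ge1$, pick a unit vector $v\in E_+\cap W$. Then
\[
\langle P_WH_\theta v,v\rangle=\langle H_\theta v,v\rangle\ge t/4,
\]
so $\lambda_{\max}(P_WH_\theta|_W)\ge t/4$, as required.

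The main obstacle is the boundary case, since the counting above only gives $N\ge tn/4\ge d$ and not $N>d$. I would sharpen the count with a strict inequality. Eigenvalues strictly below $t/4$ contribute strictly less than $(n-N)t/4$ unless $n=N$, and this yields $N>tn/4\ge d$. If $N=n$, all eigenvalues are at least $t/4>0$, which contradicts trace zero. Concretely, the positive sum satisfies $tn/2\le N\cdot1+(\text{sum of eigenvalues in }(0,t/4))$. The second term is less than $(n-N)t/4$ whenever that set is nonempty, and otherwise the inequality $N\ge tn/2>tn/4$ is immediate. Since $N$ is an integer with $N>d$, we get $N\ge d+1$ and the argument closes. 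Alternatively, the threshold could be lowered slightly, but the strict count suffices. Convexity then gives the disk for all directions $\theta$.
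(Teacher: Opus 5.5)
Your proof is correct and follows essentially the paper's route. The trace-norm lower bound, together with $\tr H_\theta=0$ and $\norm{H_\theta}\le1$, forces at least $d+1$ eigenvalues of $H_\theta$ to be at least $t/4$. Your explicit intersection $E_+\cap W$ is the min--max step that carries this to $\lambda_{\max}(P_WH_\theta|_W)$, and convexity of the numerical range gives the disk.

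The only difference is bookkeeping. The paper bounds $\lambda_{d+1}(H_\theta)\ge (tn/2-d)/(n-d)\ge t/4$ directly, which has slack and avoids the boundary case you handle with the strict count.
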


\begin{proof}
For every $\theta$, the Hermitian matrix
$H_\theta=\Rea(e^{\ii\theta}A)$ has norm at most one
and trace zero. Its positive eigenvalues therefore sum to
\[
\frac12\norm{H_\theta}_1\ge tn/2.
\]
Order its eigenvalues decreasingly. Each is at most one,
so fewer than $d+1$ positive eigenvalues would give a positive sum
at most $d<tn/2$, a contradiction. Bounding the first $d$ eigenvalues
by one and each remaining positive eigenvalue by
$\lambda_{d+1}(H_\theta)$ gives
\[
tn/2\le d+(n-d)\lambda_{d+1}(H_\theta).
\]
Consequently,
\[
\lambda_{d+1}(H_\theta)
\ge\frac{tn/2-d}{n-d}\ge t/4.
\]
The min--max principle gives
\[
\lambda_{\max}(P_WH_\theta|_W)\ge t/4.
\]
These largest eigenvalues are the directional supports of
the numerical range of $T$. That range is compact and
convex, so it contains the disk of radius $t/4$;
see~\cite[Section~2]{li2008canonical}.
\end{proof}

To construct the off-diagonal block, we need a unit
vector with zero expectation and a large image under the compressed
operator. The following lemma extracts such a vector from two opposite
points in its numerical range.

\begin{lemma}
\label{lem:neutral}
If the numerical range of $T$ contains both $\rho$ and
$-\rho$, where $\rho>0$, then there is a unit vector $v$
such that
\begin{equation}\label{eq:neutral}
v^*Tv=0,\qquad \norm{Tv}\ge\rho.
\end{equation}
\end{lemma}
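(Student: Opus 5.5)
Let $x,y$ be unit vectors with $x^*Tx=\rho$ and $y^*Ty=-\rho$. The idea is to connect $x$ and $y$ by a continuous path of unit vectors inside a real two-dimensional family and apply the intermediate value theorem to $\Rea$ and $\Ima$ of the quadratic form. Since the numerical range of a $2\times2$ compression is an ellipse (Toeplitz--Hausdorff), I expect to argue instead inside $\Span\{x,y\}$. Note that $x,y$ are linearly independent, since otherwise $x^*Tx=y^*Ty$. Restrict to $W_0=\Span\{x,y\}$ and let $T_0=P_{W_0}T|_{W_0}$. Its numerical range is a compact convex set, an elliptical disk, containing $\pm\rho$, hence containing $0$. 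So there is a unit $v\in W_0$ with $v^*Tv=v^*T_0v=0$.

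Existence of a zero of the form is easy; the real content is the lower bound $\norm{Tv}\ge\rho$. Since $\norm{Tv}\ge\norm{T_0v}$, it suffices to work with the $2\times2$ matrix $T_0$. The plan is to choose $v$ cleverly rather than arbitrarily.

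Consider the set $Z=\{v\in W_0:\norm v=1,\ v^*T_0v=0\}$. Write $T_0=\alpha I+N$ after subtracting the trace part, with $\alpha=\tr T_0/2$. The numerical range of $T_0$ is an ellipse centered at $\alpha$ with foci at the eigenvalues $\alpha\pm\mu$ and minor semi-axis $\tfrac12\sqrt{\norm{T_0}_{\mathrm F}^2-|\lambda_1|^2-|\lambda_2|^2}$.

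A cleaner route avoids ellipse geometry entirely. For a unit $v$ with $v^*Tv=0$, we have $\norm{Tv}=\max_{\norm u=1}|u^*Tv|$, and $u$ may be taken orthogonal to $v$, since the $v$-component contributes $v^*Tv=0$. So we need $u\perp v$ with $|u^*Tv|\ge\rho$.

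Write $x=av+bu$ and $y=cv+du$ in the orthonormal basis $(v,u)$ of $W_0$, and expand the two quadratic forms:
\[
\rho=\bar a b\,v^*Tu+a\bar b\,u^*Tv+|b|^2u^*Tu,\qquad
-\rho=\bar c d\,v^*Tu+c\bar d\,u^*Tv+|d|^2u^*Tu.
\]
If both off-diagonal entries $|v^*Tu|$ and $|u^*Tv|$ were smaller than $\rho$, this would have to give a contradiction. However, that is not automatic for an arbitrary $v$.

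So the plan is to choose $v\in Z$ maximizing $\norm{T_0v}$, and then show that the maximum is at least $\rho$. The argument I would try uses the $2\times2$ normal form. By unitary change of basis and rotation, write
\[
T_0=\begin{pmatrix}\alpha&\beta\\0&\alpha\end{pmatrix}+\begin{pmatrix}\mu&0\\0&-\mu\end{pmatrix},
\]
that is, upper triangular (Schur form). Then parametrize unit vectors and compute directly. This is the main obstacle, and I expect a computation: averaging $\norm{T_0v}^2$ over the circle $Z$, or comparing with $\tfrac12(\norm{T_0x}^2+\norm{T_0y}^2)\ge\rho^2$ via $\norm{T_0x}\ge|x^*T_0x|$.

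An alternative I would try first, because it is slicker, uses the fact that $\norm{T_0v}^2=v^*T_0^*T_0v$ is a Hermitian form. On the real curve $Z$, which is connected (a great circle in the Hopf picture), the continuous function $g(v)=\norm{T_0v}^2$ takes some value. Then show that $\max_Z g\ge\rho^2$ by exhibiting $v\in Z$ on the segment of the Bloch sphere between $x$ and $y$. Along that segment, $v^*T_0v$ traces a chord of the ellipse from $\rho$ to $-\rho$ passing through $0$, and the chord's structure forces $|\text{off-diagonal}|\ge\rho$.
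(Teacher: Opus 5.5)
The proposal has a genuine gap. Your reduction to $W_0=\Span\{x,y\}$ is harmless, since the statement remains true for $T_0$. You have also identified the right quantity, $\tfrac12(\norm{T_0x}^2+\norm{T_0y}^2)\ge\rho^2$, and the right choice of $v$ (a maximizer of $\norm{T_0v}$ over $Z$). But you never prove the one step that matters: that some unit $v$ with $v^*T_0v=0$ has $\norm{T_0v}^2$ at least this value.

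None of the listed routes closes this step. The Schur-form computation and the averaging over $Z$ are not carried out, and a uniform average over $Z$ has no reason to relate to $x$ and $y$. The Bloch-sphere sketch rests on two incorrect pictures:
\begin{itemize}
\item When $T_0$ is not normal, the affine map $D\mapsto\tr(T_0D)$ on the Bloch ball of $W_0$ has a line as its zero set. So $Z$ modulo phases is two points, not a connected great circle.
\item On the chord from $xx^*$ to $yy^*$ the expectation vanishes only at the midpoint $D_0=\tfrac12(xx^*+yy^*)$. This point has rank two, so no $v\in Z$ lies on that segment.
\end{itemize}

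The missing idea is to pass through the mixed state $D_0$ itself. It satisfies $\tr(T_0D_0)=0$ and $\tr(T_0^*T_0D_0)\ge\rho^2$. The zero set of $D\mapsto\tr(T_0D)$ contains $D_0$, an interior point of the Bloch ball. It meets the ball in a chord, or in a disk when $T_0$ is normal, whose extreme points are pure states $vv^*$ with $v\in Z$. Writing $D_0$ as a convex combination of two such points and using that $D\mapsto\tr(T_0^*T_0D)$ is affine, one of them has $\norm{T_0v}^2\ge\rho^2$. Then $\norm{Tv}\ge\norm{T_0v}$ finishes.

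This is precisely the paper's argument, done in the full space without restricting to $W_0$. The paper maximizes $\tr(T^*TD)$ over $\{D\ge0,\ \tr D=1,\ \tr(TD)=0\}$ and notes that $D_0$ is feasible with value at least $\rho^2$. It then shows that extreme points have rank one: on a support of rank $s\ge2$, the Hermitian perturbations form a real space of dimension $s^2\ge4$, while there are only three real linear constraints.
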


\begin{proof}
Choose unit vectors $x_+,x_-$ with expectations
$\rho,-\rho$. Maximize $\tr(T^*TD)$ over matrices satisfying
\[
D\ge0,\qquad \tr D=1,\qquad \tr(TD)=0.
\]
The feasible set is compact and contains
$(x_+x_+^*+x_-x_-^*)/2$, whose objective value is at least
$\rho^2$ by Cauchy--Schwarz.

A maximizer can be chosen to be an extreme point of the
feasible set. Such a point has rank one. Indeed, if its
rank were $s\ge2$, the Hermitian operators on its support
would form a real vector space of dimension $s^2\ge4$.
The three real constraints above would admit a nonzero
supported Hermitian perturbation $E$ in their common
kernel. For sufficiently small $\eta>0$, both
$D+\eta E$ and $D-\eta E$ would remain feasible,
contradicting extremality.

The maximizing matrix is therefore $vv^*$ for a unit
vector $v$. Its constraints and objective value give
$v^*Tv=0$ and $\norm{Tv}^2\ge\rho^2$.
\end{proof}

Repeated application of the two lemmas produces an invertible
off-diagonal block whose rank is proportional to the dimension.
Making that block the identity allows us to apply
Proposition~\ref{prop:absorb}, with the following bound for $0<t\le1$:
\begin{equation}\label{eq:KH}
\begin{aligned}
K_H(t)={}&\bigl[f^{\circ\lceil16/t\rceil}(4/t)\bigr]^2
\bigl(\lceil16/t\rceil+1\bigr)\\
&\quad\times\Bigl[4\Pi\bigl(f^{\circ\lceil16/t\rceil}(4/t)\bigr)
+2\bigl(\lceil16/t\rceil+1\bigr)
f^{\circ\lceil16/t\rceil}(4/t)\Bigr].
\end{aligned}
\end{equation}

\begin{proposition}\label{prop:high}
Let $n\ge1$ be an integer and $0<t\le1$. Every trace-zero matrix
$A\in\mathcal T_n(t)$ admits a
representation $A=\comm BC$, with $B,C$ of the same size as $A$, such that
\[
\norm{B}\norm{C}\le K_H(t)\norm{A}.
\]
\end{proposition}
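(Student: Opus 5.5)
We may assume $A\ne0$ and, by homogeneity, $\norm{A}=1$. The plan is to build an orthonormal family $v_1,\ldots,v_r,w_1,\ldots,w_r$ with $r=\lfloor tn/16\rfloor+1$ such that the block of $A$ from $\Span\{v_i\}$ to $\Span\{w_i\}$ is diagonal with entries in $[t/4,1]$. A diagonal similarity then turns this block into $I_r$, and Proposition~\ref{prop:absorb} finishes the proof.

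\emph{Greedy construction.} Suppose $v_1,\ldots,v_{j-1},w_1,\ldots,w_{j-1}$ have been chosen. Let $W_{j-1}$ be the orthogonal complement of
\[
\Span\{v_i,\,w_i,\,Av_i,\,A^*w_i : i<j\}.
\]
Its codimension is at most $4(j-1)\le tn/4$ for $j\le r$. By Lemma~\ref{lem:disk}, the numerical range of $T=P_{W_{j-1}}A|_{W_{j-1}}$ contains $\pm t/4$. Lemma~\ref{lem:neutral} then gives a unit vector $v_j\in W_{j-1}$ with $v_j^*Tv_j=0$ and $\norm{Tv_j}\ge t/4$. Set $w_j=Tv_j/\norm{Tv_j}\in W_{j-1}$; it is orthogonal to $v_j$ because $v_j^*Tv_j=0$. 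Hence the family stays orthonormal.

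\emph{The block is diagonal.} We have $\langle w_j,Av_j\rangle=\norm{Tv_j}\in[t/4,1]$. For $i<j$, the vector $v_j$ is orthogonal to $A^*w_i$, so $\langle w_i,Av_j\rangle=0$. For $i>j$, the vector $w_i$ lies in $W_{i-1}$. It is therefore orthogonal to $Av_j$ and to $w_1,\ldots,w_j$. Write $Av_j=P_{W_{j-1}}Av_j+(\text{component in }W_{j-1}^\perp)$. The second term is orthogonal to $w_i$ because $W_{i-1}\subseteq W_{j-1}$. The first term is a multiple of $w_j$, which is also orthogonal to $w_i$. So $\langle w_i,Av_j\rangle=0$. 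This is the step I expect to need the most care: the naive greedy choice only gives a triangular block, whose inverse could grow exponentially. Adding $Av_i$ and $A^*w_i$ to the excluded span is what forces diagonality while costing only a constant factor in $r$.

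\emph{Assembling the proof.} Take block~$1$ to be $\Span\{w_i\}$ and block~$2$ to be $\Span\{v_i\}$, both of dimension $r$. Split the orthogonal complement, of dimension $n-2r$, into blocks of size at most $r$. Since $r\ge tn/16$, this uses at most $\lceil16/t\rceil$ further blocks, so $b-2\le\lceil16/t\rceil$. In this basis $A_{12}=D$ is diagonal with entries in $[t/4,1]$.

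Conjugate by $S=\diag(D,I,\ldots,I)$. Then $(S^{-1}AS)_{12}=I_r$, $\chi(S)\le4/t$, and $\norm{S^{-1}AS}\le4/t=:a\ge1$; the trace is still zero. Apply Proposition~\ref{prop:absorb} with $a=4/t$, using that $f$ is nondecreasing with $f(x)\ge x$ to replace $b-2$ by $\lceil16/t\rceil$. Conjugating back by $S$ multiplies the norm product by at most $\chi(S)^2\le a^2$, which cancels the factor $a^{-2}$ in \eqref{eq:absorb}. The result is exactly $K_H(t)$ in \eqref{eq:KH}, and rescaling restores the factor $\norm{A}$.
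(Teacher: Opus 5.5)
Your proposal is correct and follows essentially the same route as the paper. It uses a greedy construction via Lemmas~\ref{lem:disk} and~\ref{lem:neutral}, excluding four vectors per step, to obtain a diagonal $(1,2)$ block with entries in $[t/4,1]$; then a diagonal similarity with condition number at most $4/t$; then Proposition~\ref{prop:absorb} with at most $\lceil16/t\rceil$ extra blocks. The differences are cosmetic: you exclude $\{v_i,w_i,Av_i,A^*w_i\}$ and take $w_j$ to be the normalized compressed image $P_{W_{j-1}}Av_j$, whereas the paper excludes $\{v_i,Av_i,A^*v_i,A^*Av_i\}$ and uses $W=A(V)$. You also take $r=\lfloor tn/16\rfloor+1$ instead of $\lceil tn/16\rceil$, and conjugate by $\diag(D,I,\ldots,I)$ instead of $\diag(I,D^{-1},I)$.
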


\begin{proof}
For this proof, put $L=\lceil16/t\rceil$,
$a_*=f^{\circ L}(4/t)$, and $k_*=L+1$.
The zero matrix is immediate. Normalize $\norm{A}=1$
and set $\rho=t/4$ and $r=\lceil tn/16\rceil$.
We construct orthonormal vectors $v_1,\ldots,v_r$ such that
\[
v_i^*Av_j=0\quad\text{for all }i,j,
\]
while the vectors $Av_i$ are pairwise orthogonal and have
norms at least $\rho$.

After choosing $v_1,\ldots,v_j$, work in
\[
W_j=
\Span\{v_i,Av_i,A^*v_i,A^*Av_i:1\le i\le j\}^{\perp}.
\]
For $0\le j\le r-1$,
\[
\codim W_j\le4j<tn/4.
\]
Lemmas~\ref{lem:disk} and~\ref{lem:neutral}, applied to
$P_{W_j}A|_{W_j}$, give a unit vector $v_{j+1}\in W_j$
with
\[
v_{j+1}^*Av_{j+1}=0,\qquad
\norm{Av_{j+1}}
\ge\norm{P_{W_j}Av_{j+1}}\ge\rho.
\]
The defining orthogonality conditions preserve
orthonormality, make both mixed coefficients
$v_i^*Av_{j+1}$ and $v_{j+1}^*Av_i$ vanish, and ensure
$Av_{j+1}\perp Av_i$.

Let $V=\Span\{v_i\}$ and $W=A(V)$. Then $V\perp W$
and both spaces have dimension $r$, so $2r\le n$.
In the bases
\[
w_i=\frac{Av_i}{\norm{Av_i}}
\quad\text{of }W,\qquad
v_i\quad\text{of }V,
\]
the $(1,2)$ block of $A$ is
\[
D=\diag(\norm{Av_1},\ldots,\norm{Av_r}),
\qquad \rho I\le D\le I.
\]
The similarity
\[
S_0=\diag(I_r,D^{-1},I_{\rm rest})
\]
has condition number at most $4/t$.
It makes the $(1,2)$ block of $S_0^{-1}AS_0$ equal to
$I_r$ and gives $\norm{S_0^{-1}AS_0}\le4/t$.

Partition the remaining space into blocks of positive
size at most $r$. Since $r\ge tn/16$, there are at most
$L$ such blocks. Proposition~\ref{prop:absorb} applies
with initial norm bound $4/t$. By monotonicity of $f$
and $\Pi$, its commutator bound is at most
\[
\left(\frac{a_*}{4/t}\right)^2
k_*\bigl(4\Pi(a_*)+2k_*a_*\bigr).
\]
Conjugating back by $S_0$ multiplies this bound by at most
$(4/t)^2$, giving \eqref{eq:KH}.
Finally, undo the scalar normalization.
\end{proof}

\subsection{When the uniform lower bound fails}
\label{sec:low}

If $A\in M_n(\C)$ and $A\notin\mathcal T_n(t)$ at the chosen parameter,
some rotated Hermitian part has trace norm below $tn\norm{A}$.
We use this direction to construct equal-rank trace-zero compressions
with small operator norms. Their subspaces may depend on $A$ and
need not be spanned by vectors from a prescribed
coordinate basis.

We begin with two tools for grouping the spectral data
and choosing a basis within each group.
We use the following consequence of the Steinitz theorem:
vectors $u_1,\ldots,u_N\in\R^2$ with sum zero and
$\norm{u_i}_\infty\le1$ can be reordered so that every
partial sum has $\ell_\infty$ norm at most $2$.
This is the case $d=2$
of~\cite[Theorem~1.2]{barany2024matrix}.

We also use the simultaneous hollowization theorem of
Damm and Fa\ss bender: three trace-zero Hermitian matrices
can be conjugated by one unitary so that the first two
have zero diagonal and the third has at most two nonzero
diagonal entries
\cite[Proposition~2.13(b)]{damm2020simultaneous}.

Subtracting the scalar means gives the following choice
of basis, which preserves two diagonal expectations and bounds a third.

\begin{lemma}\label{lem:flatbasis}
Let $H,G$ be Hermitian operators on $\C^R$, with $R\ge2$,
and let $E\ge0$. There is an orthonormal basis
$v_1,\ldots,v_R$ such that
\begin{equation}\label{eq:flatbasis}
v_a^*Hv_a=\frac{\tr H}{R},\qquad
v_a^*Gv_a=\frac{\tr G}{R},\qquad
v_a^*Ev_a\le\frac{2\tr E}{R}
\quad(1\le a\le R).
\end{equation}
\end{lemma}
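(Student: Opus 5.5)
Our plan is to reduce to the Damm–Faßbender hollowization theorem quoted just before the statement, applied to three trace-zero Hermitian operators. Set
\[
H_0=H-\frac{\tr H}{R}I,\qquad G_0=G-\frac{\tr G}{R}I,\qquad E_0=E-\frac{\tr E}{R}I .
\]
All three are Hermitian with trace zero. By \cite[Proposition~2.13(b)]{damm2020simultaneous}, a single unitary conjugation puts $H_0$ and $G_0$ in zero-diagonal form and leaves $E_0$ with at most two nonzero diagonal entries. We take $v_1,\ldots,v_R$ to be the columns of this unitary. Then $v_a^*H_0v_a=v_a^*G_0v_a=0$, and since $v_a$ is a unit vector, $v_a^*Hv_a=\tr H/R$ and $v_a^*Gv_a=\tr G/R$ for every $a$. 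This settles the first two conditions in \eqref{eq:flatbasis}.

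For the third condition, write $e_a=v_a^*Ev_a\ge0$, which is nonnegative because $E\ge0$. We have $\sum_a e_a=\tr E$. The diagonal entries $e_a-\tr E/R$ of $E_0$ vanish except at no more than two indices. At every other index, $e_a=\tr E/R\le 2\tr E/R$. Suppose the exceptional indices are $a_1$ and $a_2$. Then
\[
e_{a_1}+e_{a_2}=\tr E-(R-2)\frac{\tr E}{R}=\frac{2\tr E}{R}.
\]
Because both terms are nonnegative, each one is at most $2\tr E/R$. If only one index is exceptional, the same computation gives $e_{a_1}=\tr E/R$. So every $a$ satisfies $v_a^*Ev_a\le 2\tr E/R$, as required. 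The hypothesis $R\ge2$ is exactly what the hollowization result needs, and it also makes $R-2\ge0$.

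The only delicate point is matching the exact form of the cited proposition: which two of the three matrices it hollowizes, and whether it actually produces a unitary conjugation. We would check both by assigning $E_0$ to the third slot. If the citation only guaranteed hollowization of a general pair, a fallback is available. First hollowize $H_0,G_0$ jointly, using the convexity of the joint numerical range of two Hermitian matrices to pick successive vectors with $v^*H_0v=v^*G_0v=0$. Then, at each step, choose among the admissible vectors one that keeps $v^*Ev$ at most the current average, which is possible by averaging over a suitable subspace. This route is messier, so we would rely on the cited statement directly.
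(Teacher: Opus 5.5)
Your proposal is correct and is essentially the paper's own proof. Like the paper, you subtract the scalar means and apply the Damm--Fa\ss bender hollowization with $E_0$ in the third slot. You then use that the at most two exceptional diagonal entries of $E$ are nonnegative and total at most $2\tr E/R$; your explicit computation of that total just spells out what the paper states in one line.
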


\begin{proof}
Subtract the scalar mean from each matrix and apply
the simultaneous hollowization theorem.
The resulting diagonal entries of $H$ and $G$ equal
their respective means.
All but at most two diagonal entries of $E$ equal
$\tr E/R$. The remaining entries are nonnegative and
have total at most $2\tr E/R$, proving the last bound.
\end{proof}

To preserve the trace identities supplied by the
preceding lemma, each selected set must contain exactly one index
from every group. The following consequence of Corollary~\ref{cor:MSS-subidentity}
enforces this requirement while controlling the covariance sums.

\begin{lemma}\label{lem:transversals}
Let $\kappa\ge0$ and let $R\ge1$ be a power of two.
Suppose vectors are indexed by finitely many groups of
size $R$ and satisfy
\[
\sum_i f_if_i^*\le I,\qquad
\norm{f_i}^2\le\kappa/R.
\]
Their indices can be partitioned into $R$ sets, each
containing exactly one index from every group, such that
each resulting set $J$ satisfies
\begin{equation}\label{eq:transversalbound}
\norm{\sum_{i\in J}f_if_i^*}
\le\frac{\bigl[1+2(\sqrt2+1)\sqrt{\kappa}\bigr]^2}{R}.
\end{equation}
\end{lemma}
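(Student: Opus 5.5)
We use a recursive bisection with $\log_2 R$ levels. At each level, every current set contains the same number of indices from every group. We split every group in half by pairing its indices and choosing one member of each pair at random. Corollary~\ref{cor:MSS-subidentity} then selects a good outcome. After $\log_2 R$ levels, each group has been split down to single indices, so every final set $J$ is a transversal containing exactly one index from every group. If $\kappa=0$, all $f_i$ vanish and any transversal partition works, so we assume $\kappa>0$.

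\textbf{One bisection step.} Fix a current index set $S$. Suppose it contains $2q$ indices from each group, with $q\ge1$, and that $\norm{\sum_{i\in S}f_if_i^*}\le\sigma$. Pair the indices of $S$ within each group arbitrarily. For each pair $p=\{i,j\}$, define an independent random vector in $\C^d\oplus\C^d$ by
\[
x_p=\frac{(f_i,\;s f_j)\ \text{or}\ (f_j,\;s f_i)}{\sqrt{\sigma/2}},
\]
where the order is chosen with probability $1/2$ and $s\in\{\pm1\}$ is an independent uniform sign. The off-diagonal blocks of $\mathbb E x_px_p^*$ carry a factor $\mathbb E s=0$, so they vanish. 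Each diagonal block equals $(f_if_i^*+f_jf_j^*)/\sigma$. Hence
\[
\sum_p\mathbb E x_px_p^*=\diag\Bigl(\sum_{i\in S}f_if_i^*,\ \sum_{i\in S}f_if_i^*\Bigr)\Big/\sigma\le I .
\]
We also have $\norm{x_p}^2\le(2\kappa/R)/(\sigma/2)=:\varepsilon$.

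Corollary~\ref{cor:MSS-subidentity} gives an outcome with $\norm{\sum_p x_px_p^*}\le(1+\sqrt\varepsilon)^2$. The two diagonal blocks of $\sum_p x_px_p^*$ are $(\sigma/2)^{-1}\sum_{i\in S_1}f_if_i^*$ and $(\sigma/2)^{-1}\sum_{i\in S_2}f_if_i^*$. Here $S_1$ and $S_2$ are the two halves determined by the outcome, and each contains exactly $q$ indices of every group. The norm of a diagonal block is at most the norm of the whole matrix, so each half satisfies
\[
\Bigl\lVert\sum_{i\in S_\ell}f_if_i^*\Bigr\rVert^{1/2}\le\sqrt{\sigma/2}+\sqrt{2\kappa/R}.
\]

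\textbf{Iteration.} Start with all indices, so $\sigma_0\le1$ by hypothesis. Apply the step to every current set at each of the $m=\log_2R$ levels. Set $s_\ell=\sqrt{\sigma_\ell}$. Then
\[
s_{\ell+1}\le s_\ell/\sqrt2+\sqrt{2\kappa/R}.
\]
Unrolling this recursion gives
\[
s_m\le 2^{-m/2}+\sqrt{2\kappa/R}\sum_{j\ge0}2^{-j/2}.
\]
Since $2^{-m/2}=R^{-1/2}$ and $\sum_{j\ge0}2^{-j/2}=\sqrt2(\sqrt2+1)$, this becomes
\[
s_m\le R^{-1/2}\bigl(1+2(\sqrt2+1)\sqrt\kappa\bigr).
\]
Squaring yields \eqref{eq:transversalbound}.

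\textbf{Main obstacle.} The delicate point is the design of the random vector for each pair. A naive choice such as $(f_i,f_j)$ versus $(f_j,f_i)$ produces nonzero cross terms $f_if_j^*+f_jf_i^*$ in the expected covariance. Those cross terms would prevent the per-level halving $\sigma\mapsto\sigma/2$, and the final $1/R$ factor would be lost. The random sign $s$ removes the cross terms in expectation while keeping every $x_p$ rank one and finitely supported, as Theorem~\ref{thm:MSS} requires. The sign does not affect the diagonal blocks, which are the quantities we actually bound.
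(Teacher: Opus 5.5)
Your proof is correct and is essentially the paper's argument. The paper uses the same four equally likely choices $\sqrt2(a\oplus\pm b)$ and $\sqrt2(b\oplus\pm a)$ per pair, applies Corollary~\ref{cor:MSS-subidentity} after normalizing by the current bound, and unrolls the same recursion $\sqrt{\Lambda_{\mathrm{next}}}=\sqrt{\Lambda/2}+\sqrt{2\kappa/R}$ over $\log_2 R$ levels. The only detail to tighten is that the paper handles a set with $\Lambda=0$ separately. You should instead take $\sigma_\ell$ to be the recursively defined bound starting from $\sigma_0=1$, which is always positive, rather than the actual norm. Then your normalization by $\sqrt{\sigma/2}$ never divides by zero.
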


\begin{proof}
Write $R=2^h$ with $h\ge0$ an integer.
If $\kappa=0$, all vectors vanish and any such partition works.
Assume $\kappa>0$ and put $\delta=\kappa/R$.

Consider a set containing the same even number of indices
from each group, with $\sum_i f_if_i^*\le\Lambda I$.
Pair the indices within each group. For each pair of
vectors $a,b$, independently choose one of
\begin{equation}\label{eq:fourchoices}
\sqrt2(a\oplus b),\quad
\sqrt2(a\oplus(-b)),\quad
\sqrt2(b\oplus a),\quad
\sqrt2(b\oplus(-a))
\end{equation}
with equal probability. Denote the chosen random vector
by $w$. The expected off-diagonal covariance blocks cancel,
and
\[
\sum\mathbb E ww^*
=
\diag\left(\sum_i f_if_i^*,\sum_i f_if_i^*\right)
\le\Lambda I,\qquad
\mathbb E\norm{w}^2\le4\delta.
\]
For $\Lambda>0$, apply Corollary~\ref{cor:MSS-subidentity}
to $w/\sqrt{\Lambda}$, with
$\varepsilon=4\delta/\Lambda$. Some outcome satisfies
\[
\norm{\sum ww^*}\le(\sqrt{\Lambda}+2\sqrt\delta)^2.
\]
Its two diagonal blocks are twice the corresponding
vector sums for two complementary sets, each containing
one index from every pair. Thus both sets satisfy the bound
\begin{equation}\label{eq:binaryrecurrence}
\Lambda_{\rm next}I,\qquad
\Lambda_{\rm next}=\frac12(\sqrt{\Lambda}+2\sqrt\delta)^2.
\end{equation}
If $\Lambda=0$, all vectors in the set vanish and can be
divided arbitrarily.

Starting with $\Lambda_0=1$, repeat this division for $h$ levels.
Each final set contains exactly one index from every
original group, and these sets partition all indices.
Taking square roots in \eqref{eq:binaryrecurrence} gives
\[
\sqrt{\Lambda_h}
\le2^{-h/2}
+\sqrt{2\delta}\sum_{j=0}^{h-1}2^{-j/2}
\le
\frac{1+2(\sqrt2+1)\sqrt{\kappa}}{\sqrt R}.
\]
Squaring proves \eqref{eq:transversalbound}.
\end{proof}

We now combine spectral grouping, the choice of basis,
and the preceding selection estimate. Together they turn a small
trace norm in one direction into equal-rank compressions with zero
trace and small operator norm.

\begin{proposition}\label{prop:low}
Let $R\ge2$ be a power of two and let $k\ge1$ be an integer.
Suppose $A\in M_{kR}(\C)$ has trace zero and, for some $\theta\in\R$,
\begin{equation}\label{eq:lowmass}
\norm{\Rea(e^{\ii\theta}A)}_1\le2k\norm{A}.
\end{equation}
There are $R$ mutually orthogonal rank-$k$ projections
$P_\ell$ summing to $I$ such that
\begin{equation}\label{eq:lowconclusion}
\tr(P_\ell AP_\ell)=0,\qquad
\norm{P_\ell AP_\ell}
\le\frac{319}{R}\norm{A}.
\end{equation}
\end{proposition}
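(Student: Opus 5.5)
Throughout, normalize $\norm A=1$ and replace $A$ by $e^{\ii\theta}A$, so that $H=\Rea A$ satisfies $\norm H_1\le 2k$; put $G=\Ima A$ and $E=(H^2)^{1/2}$, so $\tr E\le 2k$ and $-E\le H\le E$. The plan is to produce an orthonormal basis of $\C^{kR}$ indexed by $k$ groups of size $R$, and then use Lemma~\ref{lem:transversals} to split it into $R$ transversals $J_1,\dots,J_R$. Each $P_\ell$ is the projection onto the span of the basis vectors in $J_\ell$, which gives rank $k$ and $\sum_\ell P_\ell=I$.

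\textbf{Trace condition.} Apply Lemma~\ref{lem:flatbasis} to the compressions of $H$, $G$, $E$ to the span of each group. Then every basis vector in group $g$ has $v^*Hv$ and $v^*Gv$ equal to the group means $\tr(H_g)/R$ and $\tr(G_g)/R$. A transversal takes exactly one vector from each group, so
\[
\tr(P_\ell AP_\ell)=\frac1R\sum_g\tr(A_g)=\frac{\tr A}{R}=0,
\]
provided the group subspaces are orthogonal and span $\C^{kR}$. This holds for every transversal, so the trace condition is automatic once the groups are fixed.

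\textbf{Hermitian part $H$.} With $f_i=E^{1/2}v_i$ we have $\sum_i f_if_i^*=E\le I$ and $\norm{P_\ell EP_\ell}=\norm{\sum_{i\in J_\ell}f_if_i^*}$. Since $\pm P_\ell HP_\ell\le P_\ell EP_\ell$, it suffices to bound $\norm{P_\ell EP_\ell}$. Lemma~\ref{lem:flatbasis} gives $\norm{f_i}^2\le 2\tr(E_g)/R$. To reach the hypothesis $\norm{f_i}^2\le\kappa/R$ with an absolute $\kappa$, the groups must satisfy $\tr(E_g)\le\kappa/2$. This is where the Steinitz reordering enters. I would list an eigenbasis of $H$ (which also diagonalizes $E$), record for each eigenvector a vector in $\R^2$ built from the relevant diagonal data, normalized to $\ell_\infty$ norm at most $1$ and with zero sum, and reorder so that all partial sums stay bounded by $2$. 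Cutting the reordered list into consecutive blocks of length $R$ then gives each block bounded $E$-mass and bounded trace defects. Lemma~\ref{lem:transversals} then yields $\norm{P_\ell EP_\ell}\le[1+2(\sqrt2+1)\sqrt\kappa]^2/R$.

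\textbf{Skew part $G$ (main obstacle).} $G$ has no trace-norm smallness, so the transversal estimate cannot bound $P_\ell GP_\ell$ directly: $P_\ell(G+I)P_\ell$ has trace about $k$ on a $k$-dimensional space, so its norm is of order one. The compression of $G$ must instead be made nearly scalar. My first attempt is to build the groups from a common orthogonal decomposition in which $G$ is nearly block-diagonal, for instance spectral windows of $G$ refined so that the $E$-mass condition above still holds. Then:
\begin{enumerate}[label=(\alph*)]
\item on each group, $G$ is close to the scalar $\tr(G_g)/R$, and by the hollowized diagonal this scalar is exactly the $G$-entry of that group's chosen vector;
\item cross-group entries of $G$ are small.
\end{enumerate}
In that case $P_\ell GP_\ell$ is nearly diagonal with diagonal entries summing to zero, and its norm is controlled by the spread of the $G$-eigenvalues within a window. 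Making this spread $O(1/R)$ while also controlling $\tr(E_g)$ is the step where I expect the real difficulty. If this cannot be done, I would try a second route: run the transversal selection on augmented vectors $E^{1/2}v\oplus\bigl((G-\bar G)_+\bigr)^{1/2}v\oplus\bigl((G-\bar G)_-\bigr)^{1/2}v$, so that Lemma~\ref{lem:transversals} controls all three pieces simultaneously.

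Finally, I would collect the constants: the $E$-bound from Lemma~\ref{lem:transversals} with the value of $\kappa$ forced by Lemma~\ref{lem:flatbasis} and the Steinitz constant $2$, plus the $G$-bound. The target is that the total is at most $319/R$.
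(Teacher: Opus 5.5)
You have the right architecture: Steinitz grouping into $k$ groups of size $R$, Lemma~\ref{lem:flatbasis} inside each group, and Lemma~\ref{lem:transversals} to pick transversals. Your trace argument and your reduction of the $H$-part to bounding $P_\ell EP_\ell$ are correct. The gap is the step you flag yourself, bounding $P_\ell GP_\ell$, which is left open, and both routes you propose fail.

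\textbf{Route (a)--(b).} Once the groups reduce $G$ and each transversal takes one vector per group, $P_\ell GP_\ell$ is diagonal with entries $\tr(G_g)/R$. Its norm is therefore $\max_g|\tr G_g|/R$. That quantity has nothing to do with the spread of $G$ inside a window, and the entries summing to zero over $g$ does not make any single entry small. For a concrete failure, take $A=\ii G$ with $G=\diag(I_{n/2},-I_{n/2})$ and $k\ge2$ even. Then $H=0$ and every spectral window has zero spread. Yet each group has $\tr(G_g)/R=\pm1$, so $\norm{P_\ell AP_\ell}=1$ for every transversal, far above $319/R$.

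\textbf{Augmented vectors.} This fails for the reason you already noticed. The average over $\ell$ of $\tr(P_\ell G_+P_\ell)$ is $\tr G_+/R$, which can be of order $k$ on a rank-$k$ range. So no per-vector bound $\kappa/R$ with absolute $\kappa$ exists for $G_\pm$; the cancellation between $G_+$ and $G_-$ must be used exactly.

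\textbf{Wrong basis.} Grouping an eigenbasis of $H$ is also the wrong choice. Even if you used $e_i^*Ge_i$ as a Steinitz coordinate to get $|\tr G_g|\le4$, the groups would not reduce $G$, and the cross-group entries of $G$ in $P_\ell GP_\ell$ would be uncontrolled.

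\textbf{The missing idea.} Build the groups from an eigenbasis of $G$, and balance its spectrum across groups rather than localize it. Take eigenvectors $e_i$ of $G$ with eigenvalues $\lambda_i\in[-1,1]$ and apply the Steinitz rearrangement to the vectors $\bigl(\lambda_i,\ e_i^*Ee_i-\tr E/n\bigr)\in\R^2$. These sum to zero because $\tr G=0$. $E$ need not be diagonal in this basis, since only its diagonal entries enter $\tr E_j$. Cutting the reordered list into $k$ consecutive blocks gives
\[
|\tr G_j|\le4,\qquad \tr E_j\le\tr E/k+4\le6.
\]
Lemma~\ref{lem:flatbasis} then gives $v^*Ev\le12/R$, and Lemma~\ref{lem:transversals} with $\kappa=12$ gives
\[
\norm{P_\ell HP_\ell}\le\norm{P_\ell EP_\ell}\le[1+2(\sqrt2+1)\sqrt{12}]^2/R.
\]
Each group is spanned by eigenvectors of $G$, so it reduces $G$ exactly, and $v_{j,a}^*Gv_{j',b}=0$ for $j\ne j'$. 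Hence $P_\ell GP_\ell$ is exactly diagonal in the selected basis, with entries $\tr G_j/R$ of modulus at most $4/R$. No control of $G$ within a group is needed, because only one vector per group is selected. The two bounds add to less than $319/R$.
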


When $R=2^{26}$, the matrix dimension is $n=k2^{26}$,
so $2^{-25}n\norm{A}=2k\norm{A}$. Thus
$A\notin\mathcal T_n(2^{-25})$ implies \eqref{eq:lowmass}, with a strict
inequality; the proposition also allows equality.

\begin{proof}
Write $n=kR$ and, for the estimates below, put
$C_{12}=\bigl[1+2(\sqrt2+1)\sqrt{12}\bigr]^2$.
The case $A=0$ is immediate. Normalize and rotate so that
$\norm{A}=1$ and $A=H+\ii G$, where $H,G$ are
trace-zero Hermitian matrices and
\[
E=(H^2)^{1/2}\le I,\qquad \tr E=\norm{H}_1\le2k.
\]

Choose an orthonormal eigenbasis $e_i$ of $G$, with
eigenvalues $\lambda_i$. The vectors
\[
\left(\lambda_i,e_i^*Ee_i-\frac{\tr E}{n}\right)
\in\R^2
\]
sum to zero and have $\ell_\infty$ norm at most one.
Apply the Steinitz rearrangement and divide the reordered
basis into $k$ consecutive groups of size $R$.
Each group sum is a difference of two partial sums.
Thus the corresponding spectral subspaces $V_j$ satisfy
\begin{equation}\label{eq:grouptraces}
|\tr G_j|\le4,\qquad
\tr E_j\le\frac{\tr E}{k}+4\le6,
\end{equation}
where $H_j,G_j,E_j$ denote compressions to $V_j$.
The spaces $V_j$ reduce $G$.

Apply Lemma~\ref{lem:flatbasis} within each $V_j$.
We obtain an orthonormal basis $v_{j,a}$ satisfying
\begin{equation}\label{eq:groupbasis}
v_{j,a}^*Hv_{j,a}=\frac{\tr H_j}{R},\qquad
v_{j,a}^*Gv_{j,a}=\frac{\tr G_j}{R},\qquad
v_{j,a}^*Ev_{j,a}\le\frac{12}{R}.
\end{equation}
The vectors $f_{j,a}=E^{1/2}v_{j,a}$ therefore satisfy
\[
\sum_{j,a}f_{j,a}f_{j,a}^*=E\le I,\qquad
\norm{f_{j,a}}^2\le12/R.
\]
Lemma~\ref{lem:transversals}, with $\kappa=12$, partitions
their indices into $R$ sets, each containing one index
from every group. Let $P_\ell$ project onto the span
of the corresponding basis vectors $v_{j,a}$.
These projections are mutually orthogonal, sum to $I$,
and have rank $k$.

The equal diagonal values in \eqref{eq:groupbasis} give
\[
\tr(P_\ell AP_\ell)
=\frac1R\sum_j(\tr H_j+\ii\tr G_j)
=\frac{\tr A}{R}=0.
\]
Since the spaces $V_j$ reduce $G$, the compression
$P_\ell GP_\ell$ is diagonal in its selected basis.
Equation~\eqref{eq:grouptraces} therefore gives
\[
\norm{P_\ell GP_\ell}\le4/R.
\]
Moreover,
\[
\norm{P_\ell EP_\ell}
=\norm{E^{1/2}P_\ell E^{1/2}}
=\norm{\sum_{(j,a)\in\ell}f_{j,a}f_{j,a}^*}
\le C_{12}/R.
\]
The first equality uses $\norm{X^*X}=\norm{XX^*}$ with
$X=E^{1/2}P_\ell$.
Since $-E\le H\le E$, it follows that
\[
\norm{P_\ell HP_\ell}\le C_{12}/R.
\]
Combining the two Hermitian parts,
\[
\norm{P_\ell AP_\ell}\le\frac{C_{12}+4}{R}<\frac{319}{R}.
\]
Indeed,
\[
C_{12}+4=149+96\sqrt2+8\sqrt6+8\sqrt3<319,
\]
using $\sqrt2<17/12$, $\sqrt6<5/2$, and $\sqrt3<7/4$.
Undoing the rotation and normalization proves
\eqref{eq:lowconclusion}.
\end{proof}

\subsection{Proof of the main theorem}
\label{sec:proofs}\label{sec:assembly}

We combine the two cases by strong induction on the
matrix dimension. The following lemma controls the norm
product when commutator representations of diagonal
blocks are assembled.

\begin{lemma}\label{lem:assembly}
Suppose $A$ is decomposed into
$1\le r\le2^{27}-1$ nonempty orthogonal blocks, and
each diagonal block has a representation
$A_{ii}=\comm{U_i}{V_i}$.
Then $A=\comm BC$ in the original dimension, with
\begin{equation}\label{eq:assembly}
\norm{B}\norm{C}
\le2^{16}\max_i\bigl(\norm{U_i}\norm{V_i}\bigr)+2^{42}\norm{A}.
\end{equation}
\end{lemma}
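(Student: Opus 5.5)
We mimic the last step of the proof of Proposition~\ref{prop:absorb}: build a block-diagonal first factor from rescaled copies of the $U_i$, shifted by well-separated scalars, and recover all off-diagonal blocks by Lemma~\ref{lem:sylvester}.

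\emph{Normalization.} Put $m=\max_i\norm{U_i}\norm{V_i}$. If some $A_{ii}\ne0$, rescale each pair inversely so that $\norm{U_i}\le1/4$ and $\norm{V_i}\le 4m$. If $A_{ii}=0$, replace that pair by $U_i=V_i=0$. Blocks with $U_i=0$ but $V_i\ne0$ force $A_{ii}=0$ and are treated the same way.

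\emph{Construction.} Choose distinct scalar centers $z_1,\dots,z_r\in\C$ with $|z_i|\le\sigma$ and $|z_i-z_j|\ge1$ for $i\ne j$. Set
\[
B=\diag(z_1I+U_1,\ldots,z_rI+U_r),\qquad C_{ii}=V_i .
\]
Since scalars commute with everything, $[z_iI+U_i,V_i]=A_{ii}$. For $i\ne j$, solve
\[
(z_iI+U_i)C_{ij}-C_{ij}(z_jI+U_j)=A_{ij}
\]
by Lemma~\ref{lem:sylvester}, using $\norm{U_i}+\norm{U_j}\le1/2<1\le|z_i-z_j|$. This gives $\norm{C_{ij}}\le2\norm{A_{ij}}\le2\norm A$. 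The block equations then yield $[B,C]=A$ exactly, in the original dimension.

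\emph{Norms.} We have $\norm B\le\sigma+1/4$, since $B$ is block diagonal. Split $C$ into its diagonal part, of norm at most $4m$, and its off-diagonal part. Lemma~\ref{lem:offnorm} bounds the off-diagonal part by $2(r-1)\norm A$. Hence
\[
\norm B\norm C\le(\sigma+\tfrac14)\bigl(4m+2(r-1)\norm A\bigr).
\]

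\emph{Choice of centers.} With $r<2^{27}$ we can place the centers on a square grid of $2^{14}\times2^{14}$ unit-spaced points centred at the origin, since $2^{28}\ge r$. This gives $\sigma\le 2^{13}\sqrt2<2^{14}-\tfrac14$. The diagonal term becomes $4\cdot2^{14}m=2^{16}m$, and the off-diagonal term is at most $2^{14}\cdot2\cdot2^{27}\norm A=2^{42}\norm A$. Together these give \eqref{eq:assembly}.

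\emph{Expected obstacle.} The only delicate point is the bookkeeping of constants: the centers must be kept within radius about $2^{14}$ so that both the $2^{16}$ and $2^{42}$ factors come out. A collinear placement $0,1,\ldots,r-1$ would cost a factor of $r$ in $\norm B$ and would be too large, which is why the centers are spread over a two-dimensional grid in $\C$.
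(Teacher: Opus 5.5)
Your proposal is correct and follows the paper's proof essentially step for step. It uses the same inverse rescaling to $\norm{U_i}\le 1/4$, $\norm{V_i}\le 4\max_j\norm{U_j}\norm{V_j}$, and the same centered $2^{14}\times2^{14}$ unit grid of centers giving $\norm{B}<2^{14}$; it then applies Lemma~\ref{lem:sylvester} to get $\norm{C_{ij}}\le 2\norm{A}$ and Lemma~\ref{lem:offnorm} to get $\norm{C}\le 4\max_i\norm{U_i}\norm{V_i}+2(r-1)\norm{A}$.
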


\begin{proof}
Put $p_i=\norm{U_i}\norm{V_i}$.
For each nonzero diagonal block, rescale its factors
inversely so that
$\norm{U_i}=1/4$ and $\norm{V_i}=4p_i$.
For a zero block, use two zero factors.

Choose distinct centers $z_i$ from the grid
\[
\left\{x+\ii y:
x,y\in
\left\{-2^{13}+\tfrac12,-2^{13}+\tfrac32,
\ldots,2^{13}-\tfrac12\right\}\right\}.
\]
It contains $2^{28}$ points with pairwise distances
at least one. Set
\[
B=\diag(z_iI+U_i),\qquad C_{ii}=V_i.
\]
Since $|z_i|\le\sqrt2(2^{13}-1/2)$, we have
$\norm{B}<2^{14}$.

For $i\ne j$, solve
\[
(z_i-z_j)C_{ij}+U_iC_{ij}-C_{ij}U_j=A_{ij}.
\]
Lemma~\ref{lem:sylvester} applies because
$\norm{U_i}+\norm{U_j}\le1/2$, and gives
\[
\norm{C_{ij}}\le2\norm{A_{ij}}\le2\norm{A}.
\]
The block equations yield $A=\comm BC$, while
Lemma~\ref{lem:offnorm} gives
\[
\norm{C}\le4\max_i p_i+2(r-1)\norm{A}.
\]
Multiplying by $2^{14}$ and using $r-1\le2^{27}$
proves \eqref{eq:assembly}.
\end{proof}

We apply the assembly lemma to the representations
obtained by testing membership in $\mathcal T_m(2^{-25})$ for a matrix of size $m$.
This parameter makes the compression estimate strong enough
to close the induction.

\begin{proof}[Proof of Theorem~\ref{thm:main}]
With $K_H$ defined by \eqref{eq:KH}, fix the finite
absolute constant
\begin{equation}\label{eq:K}
K=\max\{2^{16}K_H(2^{-25})+2^{42},\,2^{43}\}.
\end{equation}
We prove the bound by strong induction on $n$.
The zero matrix is immediate, which also covers $n=1$.
Let $A\ne0$ have trace zero and put $a=\norm{A}$.

Choose an orthonormal basis in which $A$ has zero
diagonal. Indeed, the average of its diagonal expectations
is zero, so convexity of the numerical range gives a
unit vector $v$ with $v^*Av=0$. The compression to
$v^\perp$ still has trace zero. Repeating this argument
produces the required basis; see
also~\cite[Corollary~2.14]{damm2020simultaneous}.

If $n<2^{26}$, apply Lemma~\ref{lem:assembly} to the
zero singleton diagonal blocks. The resulting norm
product is at most $2^{42}a\le Ka$.

Now suppose $n\ge2^{26}$ and write
\[
n=k2^{26}+q,\qquad k\ge1,\quad0\le q<2^{26}.
\]
Set $N=k2^{26}=n-q$.
Let $A_0$ be the compression to the span of the first
$N$ basis vectors, and retain the remaining
$q$ vectors as zero singleton diagonal blocks.
Then
\[
\tr A_0=0,\qquad \norm{A_0}\le a.
\]
The off-diagonal blocks are retained and will be handled
by Lemma~\ref{lem:assembly}.
If $A_0=0$, that lemma directly gives the required bound.
Otherwise, we distinguish two cases.

If $A_0\in\mathcal T_N(2^{-25})$,
Proposition~\ref{prop:high} gives a commutator
representation of $A_0$ with norm product at most
$K_H(2^{-25})\norm{A_0}$.
Assembling this block with the $q$ zero singletons gives
\[
\norm{B}\norm{C}
\le\bigl(2^{16}K_H(2^{-25})+2^{42}\bigr)a
\le Ka.
\]
The uniform lower bound here is measured relative to
$\norm{A_0}$. This case uses the direct construction,
including when $q=0$ and $A_0=A$.

If $A_0\notin\mathcal T_N(2^{-25})$, some $\theta$ satisfies
\[
\norm{\Rea(e^{\ii\theta}A_0)}_1
<2^{-25}N\norm{A_0}=2k\norm{A_0}.
\]
Proposition~\ref{prop:low}, with $R=2^{26}$, gives
$2^{26}$ trace-zero compressions $T_i$ of dimension
$k<n$ satisfying
\[
\norm{T_i}\le319\cdot2^{-26}a.
\]
By induction, each $T_i$ has a commutator representation
with norm product at most $319\cdot2^{-26}Ka$.
Together with the $q$ zero singletons, there are
\[
2^{26}+q\le2^{27}-1
\]
blocks. Lemma~\ref{lem:assembly} therefore gives
\[
\norm{B}\norm{C}
\le\left(\frac{319}{1024}K+2^{42}\right)a
\le Ka,
\]
because $319/1024<1/2$ and $K\ge2^{43}$.
This completes the induction.
\end{proof}

We have made no attempt to optimize $K$.
The choice $t=2^{-25}$ serves only to make the induction close:
the compression factor $319\cdot2^{-26}$ and the assembly factor
$2^{16}$ have product $319/1024<1/2$.
The dominant growth in the displayed bound comes from the repeated
similarities in Proposition~\ref{prop:absorb}. Each elimination step
replaces a norm bound $x$ by the degree-nine polynomial $f(x)$, and
\eqref{eq:KH} uses $\lceil16/t\rceil=2^{29}$ such iterations at the
chosen parameter. The resulting bound is explicit but extremely large.
Sharper control of these similarities, or a construction avoiding their
repeated norm growth, could substantially reduce the bound.

\section{Diagonal commutators and the obstruction to a uniform bound}
\label{subsec:diagonal-commutator-separation}
\label{sec:diagroute}

We prove Theorem~\ref{thm:main2} by constructing an explicit
family $(A_n)$ for which the dimension-independent bound
of Theorem~\ref{thm:main} fails when either commutator factor
is required to be diagonal in the prescribed basis.
For every $0<\varepsilon<1$, this family nevertheless admits
$\varepsilon$-pavings with fewer than $2\varepsilon^{-2}$
blocks, independently of the dimension. Each $A_n$ also
admits a representation $A_n=[B,C]$ with $B$ and $C$ normal
and $\norm{B}\norm{C}=1/2$.

Throughout this section, matrix norms are operator norms
in Euclidean space. All diagonal matrices, principal
submatrices, and coordinate partitions refer to the
prescribed coordinate basis. For convenience, we restate Theorem~\ref{thm:main2}
before proving it.

\begin{main2restatement}
For every power of two $n\ge2$, there is an explicit zero-diagonal Hermitian
unitary $A_n\in M_n(\C)$ such that:
\begin{enumerate}[label=(\roman*)]
\item For every dyadic $r\le n$,
\begin{equation}\label{eq:optimalpaving}
p_r(A_n)=\sqrt{\frac{n/r-1}{n-1}}.
\end{equation}
In particular, for every $0<\varepsilon<1$, the entire family admits $\varepsilon$-pavings with fewer than $2\varepsilon^{-2}$ blocks.
\item Among the representations $A_n=\comm BC$ with both factors normal,
and with no requirement that either be diagonal in the given basis, the least
value of $\norm B\norm C$ is exactly $1/2$.
\item If $n=4^k$ with $k\ge1$, then
\begin{equation}\label{eq:mainlower}
\lambda(A_n)\ge
\sqrt{\frac{(3k-1)n+1}{32(n-1)}}\ge\frac{\sqrt{k}}4.
\end{equation}
\end{enumerate}
\end{main2restatement}

We first construct the family and prove the diagonal lower
bound in part~(iii). We then establish the optimal paving
norms and the normal commutator cost in parts~(i) and~(ii).

\subsection{Construction of the family}

Start with $S_1=(0)$ and recursively define
\begin{equation}\label{eq:recursion}
S_{2n}=
\begin{pmatrix}
S_n&S_n+I_n\\
S_n-I_n&-S_n
\end{pmatrix}.
\end{equation}
This is the classical doubling construction for skew-Hadamard
matrices, written in terms of $S_n=H_n-I_n$;
see~\cite[Section~4.2, Theorem~4]{cati2024database}.
We verify the properties needed below.

\begin{lemma}\label{lem:construction}
For every power of two $n$, the matrix $S_n$ is real,
skew-symmetric, has zero diagonal and all off-diagonal
entries in $\{1,-1\}$, and satisfies
\begin{equation}\label{eq:square}
S_n^2=-(n-1)I_n.
\end{equation}
\end{lemma}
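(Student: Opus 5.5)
We argue by induction on $n$ along the powers of two, using the recursion \eqref{eq:recursion} directly. For $n=1$, $S_1=(0)$ is real, skew-symmetric (vacuously), has zero diagonal, and $S_1^2=0=-(1-1)I_1$. We assume all properties for $S_n$ and derive them for $S_{2n}$.

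\emph{Entries and diagonal.} $S_{2n}$ is real because its blocks are real. Its diagonal consists of the diagonals of $S_n$ and $-S_n$, both zero. An off-diagonal entry in a diagonal block $\pm S_n$ lies in $\{1,-1\}$ by hypothesis. In the off-diagonal blocks $S_n\pm I_n$, an off-diagonal position inherits an entry $\pm1$ from $S_n$. A diagonal position gets $0\pm1\in\{1,-1\}$, since $S_n$ has zero diagonal.

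\emph{Skew-symmetry.} Using $S_n^T=-S_n$, the transpose of $S_{2n}$ is
\[
S_{2n}^T=\begin{pmatrix}S_n^T&(S_n-I_n)^T\\(S_n+I_n)^T&-S_n^T\end{pmatrix}
=\begin{pmatrix}-S_n&-S_n-I_n\\-S_n+I_n&S_n\end{pmatrix}=-S_{2n}.
\]

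\emph{The square identity.} We multiply blocks and use $S_n^2=-(n-1)I_n$. The $(1,1)$ block is
\[
S_n^2+(S_n+I_n)(S_n-I_n)=2S_n^2-I_n=-(2n-1)I_n.
\]
The $(1,2)$ block is
\[
S_n(S_n+I_n)-(S_n+I_n)S_n=0,
\]
since $S_n$ commutes with $S_n+I_n$. Likewise, the $(2,1)$ block is
\[
(S_n-I_n)S_n-S_n(S_n-I_n)=0.
\]
The $(2,2)$ block is
\[
(S_n-I_n)(S_n+I_n)+S_n^2=2S_n^2-I_n=-(2n-1)I_n.
\]
Hence $S_{2n}^2=-(2n-1)I_{2n}$, which is \eqref{eq:square} at size $2n$.

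No step is a real obstacle; the only care needed is in the square computation. The off-diagonal blocks vanish because every block is a polynomial in $S_n$, so all the blocks commute with one another.
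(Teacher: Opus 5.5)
Your proposal is correct and follows essentially the same route as the paper: induction along powers of two, with the entry and skew-symmetry conditions preserved by the recursion. The square identity comes from block multiplication, using that $S_n$ commutes with $S_n\pm I_n$ so that the diagonal blocks are $2S_n^2-I_n$ and the off-diagonal blocks vanish; you simply write out in full the block computations that the paper compresses into one line.
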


\begin{proof}
All assertions hold for $n=1$. The recursion preserves
skew symmetry and the stated entry conditions.
Since $S_n$ commutes with $S_n\pm I_n$, block multiplication
gives
\[
S_{2n}^2=
\begin{pmatrix}
2S_n^2-I_n&0\\
0&2S_n^2-I_n
\end{pmatrix}
=-(2n-1)I_{2n}.
\]
This completes the induction.
\end{proof}


For every power of two $n\ge 2$, set
\begin{equation}\label{eq:A}
A_n=\frac{\ii S_n}{\sqrt{n-1}}.
\end{equation}
Lemma~\ref{lem:construction} gives
$A_n^*=A_n$ and $A_n^2=I_n$, so $A_n$ is a Hermitian
unitary. Moreover,
\begin{equation}\label{eq:flat}
(A_n)_{ii}=0,\qquad
|(A_n)_{ij}|^2=\frac1{n-1}\quad(i\ne j).
\end{equation}
In particular, $\norm{A_n}=1$ and $\tr A_n=0$.
The equal magnitudes of its off-diagonal entries will
give both the diagonal commutator lower bound and the
optimal paving norms.

\subsection{A diagonal factor forces dimension dependence}

If $D=\diag(z_1,\ldots,z_n)$ and $A_n=[D,C]$, then
\[
(A_n)_{ij}=(z_i-z_j)c_{ij}.
\]
Thus nearby diagonal entries of $D$ force large entries
of $C$. The following estimate quantifies the accumulation
of such pairs when the points $z_i$ lie in a bounded square.

\begin{lemma}\label{lem:energy}
Let $n=4^k$, $k\ge1$, and let $z_1,\ldots,z_n$ be
distinct points in $Q=[-1,1]+\ii[-1,1]$. Then
\begin{equation}\label{eq:energy}
\sum_{i\ne j}\frac1{|z_i-z_j|^2}
\ge\frac{(3k-1)n^2+n}{32},
\end{equation}
where the sum is over ordered pairs.
\end{lemma}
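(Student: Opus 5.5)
The plan is a multiscale counting argument: partition the square $Q$ dyadically, lower-bound $1/|z_i-z_j|^2$ by the diameter of the finest dyadic square containing both points, and count close pairs at each scale by convexity.

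\emph{Step 1 (partition).} For $0\le j\le k$, partition $Q$ into $4^j$ half-open dyadic subsquares of side $2^{1-j}$. Use the half-open convention, with the outer boundary of $Q$ assigned arbitrarily, so that every point lies in exactly one square at each level and the partitions are nested. Two points in the same level-$j$ square satisfy
\[
|z_i-z_j|^2\le 2\cdot 4^{1-j}=8\cdot 4^{-j}.
\]
Distinctness of the points makes all terms finite.

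\emph{Step 2 (telescoping).} For an ordered pair $(i,l)$ with $i\ne l$, let $J(i,l)\in\{0,\ldots,k\}$ be the largest level $j\le k$ at which $z_i$ and $z_l$ share a square. Step~1 gives $|z_i-z_l|^{-2}\ge 4^{J}/8$. We rewrite this bound as a telescoping sum:
\[
\frac{4^J}{8}=\frac18+\sum_{j=1}^{J}\frac{3\cdot4^j}{32}.
\]
Summing over ordered pairs and exchanging the order of summation gives
\[
\sum_{i\ne l}\frac1{|z_i-z_l|^2}\ \ge\ \frac{n(n-1)}8+\sum_{j=1}^k\frac{3\cdot 4^j}{32}\,N_j,
\]
where $N_j$ is the number of ordered pairs of distinct indices whose points lie in a common level-$j$ square.

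\emph{Step 3 (counting by convexity).} If the level-$j$ squares contain $m_1,\dots,m_{4^j}$ points, then $N_j=\sum_s m_s(m_s-1)$. Cauchy--Schwarz gives $\sum_s m_s^2\ge n^2/4^j$, hence
\[
N_j\ge n^2/4^j-n.
\]
Substituting and using $\sum_{j=1}^k4^j=(4n-4)/3$ for $n=4^k$, the right-hand side becomes
\[
\frac{4n^2-4n}{32}+\frac{3kn^2}{32}-\frac{n(4n-4)}{32}=\frac{3kn^2}{32}\ \ge\ \frac{(3k-1)n^2+n}{32},
\]
which is \eqref{eq:energy}; the last inequality holds because $n^2\ge n$.

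\emph{Main obstacle.} The delicate points are the bookkeeping in Step~2 and the boundary conventions. The telescoping must be organized so that each pair is charged exactly once per level $j\le J$, which is why nestedness of the half-open partition matters. Once the per-level counts $N_j$ are isolated, the rest is Cauchy--Schwarz and a geometric sum. If the boundary convention caused trouble, there is slack to spare: the constant $(3k-1)n^2+n$ lies below what the argument produces, so one could even discard the finest level or use a cruder diameter estimate.
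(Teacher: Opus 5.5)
Your proof is correct and follows essentially the same route as the paper: nested dyadic squares, Cauchy--Schwarz on the occupation numbers, and a geometric weighting of each ordered pair by the levels at which its two points share a square. The only difference is that your exact telescoping identity $4^J/8=\tfrac18+\sum_{j=1}^J 3\cdot4^j/32$ gives level $0$ the weight $1/8$ instead of the paper's $3/32$, so you recover the $1/32$ per ordered pair that the paper's geometric-sum estimate gives up, and you obtain the slightly stronger bound $3kn^2/32$. (Level $k$ contributes nothing, and nestedness is not actually needed, since $\#\{J\ge j\}\ge N_j$ already suffices for the inequality.)
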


\begin{proof}
For $\ell=0,\ldots,k-1$, subdivide $Q$ into $4^\ell$
squares of side length $2^{1-\ell}$, assigning boundary
points consistently to obtain nested partitions.
Let $m_{\ell,\alpha}$ be the number of points in the
$\alpha$th square. The number $N_\ell$ of ordered
distinct pairs lying in the same square satisfies
\begin{equation}\label{eq:count}
N_\ell
=\sum_\alpha m_{\ell,\alpha}^2-n
\ge\frac{n^2}{4^\ell}-n,
\end{equation}
by Cauchy--Schwarz.

Fix an ordered pair at distance $d>0$, and let
$J\subseteq\{0,\ldots,k-1\}$ be the set of levels at
which its two points lie in the same square.
For every $\ell\in J$, the diameter of that square gives
$d^2\le8\,4^{-\ell}$. Hence
\[
\sum_{\ell\in J}4^\ell
\le
\sum_{\substack{\ell\ge0\\4^\ell\le8/d^2}}4^\ell
\le\frac{32}{3d^2}.
\]
Summing over all ordered distinct pairs and applying
\eqref{eq:count}, we obtain
\begin{align*}
\sum_{i\ne j}|z_i-z_j|^{-2}
&\ge\frac3{32}\sum_{\ell=0}^{k-1}4^\ell N_\ell\\
&\ge\frac3{32}\sum_{\ell=0}^{k-1}(n^2-n4^\ell)\\
&=\frac3{32}
  \left(kn^2-\frac{n(n-1)}3\right)\\
&=\frac{(3k-1)n^2+n}{32}.
\end{align*}
\end{proof}

Now let $n=4^k$, $k\ge1$, and suppose
$A_n=[D,C]$, where $D=\diag(z_1,\ldots,z_n)$ has
entries in $Q$. Since every off-diagonal entry of $A_n$
is nonzero, the points $z_i$ must be distinct, and
\[
c_{ij}=\frac{(A_n)_{ij}}{z_i-z_j}\qquad(i\ne j).
\]

For any $T=(t_{ij})\in M_d(\C)$, the standard
orthonormal basis $e_1,\ldots,e_d$ gives
\begin{equation}\label{eq:entrynorm}
\sum_{i,j=1}^d |t_{ij}|^2
=\sum_{j=1}^d \norm{Te_j}^2
\le d\norm{T}^2.
\end{equation}
Applying this inequality to $C$, and using
\eqref{eq:flat} and Lemma~\ref{lem:energy}, we obtain
\begin{align*}
n\norm{C}^2
&\ge\sum_{i,j}|c_{ij}|^2\\
&\ge\frac1{n-1}\sum_{i\ne j}|z_i-z_j|^{-2}\\
&\ge\frac{(3k-1)n^2+n}{32(n-1)}.
\end{align*}
Since this estimate holds for every admissible $D$ and $C$,
\[
\lambda(A_n)\ge
\sqrt{\frac{(3k-1)n+1}{32(n-1)}}.
\]
Furthermore,
\[
\frac{(3k-1)n+1}{32(n-1)}-\frac{k}{16}
=\frac{(k-1)n+2k+1}{32(n-1)}>0.
\]
This proves part~(iii), namely \eqref{eq:mainlower}.

The same estimate forces dimension dependence for the
product of the two operator norms, without any normalization
on the diagonal factor. Indeed, suppose $A_n=[B,C]$
with $B$ diagonal. Since $A_n\ne0$, we have $\norm{B}>0$,
and
\[
\widetilde B=\frac{B}{\norm{B}},
\qquad
\widetilde C=\norm{B}\,C
\]
satisfy $A_n=[\widetilde B,\widetilde C]$.
The entries of $\widetilde B$ lie in the unit disk,
hence in $Q$, so
\[
\norm{B}\norm{C}
=\norm{\widetilde C}
\ge\lambda(A_n)
\ge\frac{\sqrt{k}}4.
\]
If instead $C$ is diagonal, apply the same argument to
$A_n=[C,-B]$. Thus the lower bound holds whenever
either factor is diagonal in the prescribed basis,
even when that factor is chosen after $A_n$.

Since $A_n$ has zero diagonal and norm one, we also obtain
\begin{equation}\label{eq:lambda}
\lambda(4^k)\ge\lambda(A_{4^k})
\ge
\sqrt{\frac{(3k-1)4^k+1}{32(4^k-1)}}
\ge\frac{\sqrt{k}}4.
\end{equation}
Consequently, $\sup_{m\ge2}\lambda(m)=\infty$.
Along $n=4^k$, this gives a lower bound of order
$\sqrt{\log n}$; no matching upper bound for
$\lambda(A_n)$ is asserted.

\subsection{Optimal pavings}

We now prove part~(i). For every nonempty coordinate
set $S\subseteq\{1,\ldots,n\}$, equations~\eqref{eq:entrynorm}
and~\eqref{eq:flat} give
\[
|S|\norm{(A_n)_S}^2
\ge\sum_{i,j\in S}|(A_n)_{ij}|^2
=\frac{|S|(|S|-1)}{n-1}.
\]
Dividing by $|S|$ gives
\begin{equation}\label{eq:compressionlower}
\norm{(A_n)_S}^2\ge\frac{|S|-1}{n-1}.
\end{equation}
Every partition into at most $r$ blocks contains a block
of size at least $\lceil n/r\rceil$. Therefore
\begin{equation}\label{eq:anyr}
p_r(A_n)\ge
\sqrt{\frac{\lceil n/r\rceil-1}{n-1}}
\qquad(1\le r\le n).
\end{equation}

For dyadic $r=2^\ell\le n$, the recursion attains this
lower bound. Taking the two diagonal blocks in
\eqref{eq:recursion} repeatedly, $\ell$ times, partitions
the coordinates into $r$ blocks of size $m=n/r$.
On each block, the corresponding principal submatrix
of $S_n$ is $S_m$ or $-S_m$. By
Lemma~\ref{lem:construction}, its compression in $A_n$
has norm
\[
\frac{\norm{S_m}}{\sqrt{n-1}}
=\sqrt{\frac{m-1}{n-1}}.
\]
This also holds for $m=1$, when the compression is zero.
Together with \eqref{eq:anyr}, this proves
\eqref{eq:optimalpaving}.

To obtain a block count independent of $n$, fix
$0<\varepsilon<1$ and let $R$ be the least power of two
with $R\ge\varepsilon^{-2}$. Then
$R<2\varepsilon^{-2}$.
If $n<R$, use the singleton partition. If $n\ge R$,
use the recursive $R$-block partition, for which
\[
p_R(A_n)^2
=\frac{n/R-1}{n-1}
\le\frac1R
\le\varepsilon^2.
\]
Thus every matrix in the family admits an explicit
$\varepsilon$-paving with at most $R<2\varepsilon^{-2}$
blocks.

The order $\varepsilon^{-2}$ is optimal for a block
count uniform over this family. Indeed,
\eqref{eq:compressionlower} implies that each block of
an $\varepsilon$-paving has size at most
$1+\varepsilon^2(n-1)$. Hence any such paving into
$r$ blocks satisfies
\[
r\ge\frac{n}{1+\varepsilon^2(n-1)}.
\]
The right-hand side tends to $\varepsilon^{-2}$
as $n\to\infty$ through powers of two.

\subsection{Optimal commutators with normal factors}

Finally, we prove part~(ii). Since $A_n$ is a trace-zero
Hermitian unitary, its $+1$ and $-1$ eigenspaces both
have dimension $n/2$. Thus there is a unitary $U$ such that
\[
U^*A_nU=
Z=\begin{pmatrix}I&0\\0&-I\end{pmatrix}.
\]
Here and below, the blocks have size $n/2$. Set
\[
X=\begin{pmatrix}0&I\\I&0\end{pmatrix},
\qquad
Y=\frac12\begin{pmatrix}0&-I\\I&0\end{pmatrix}.
\]
Direct multiplication gives $[X,Y]=Z$.
Moreover, $X$ is a selfadjoint unitary and $Y$ is
skew-adjoint, so both are normal and
\[
\norm{X}=1,\qquad \norm{Y}=\frac12.
\]
Consequently,
\[
A_n=[UXU^*,UYU^*]
\]
is a commutator of two normal matrices with norm
product $1/2$.

Conversely, every representation $A_n=[B,C]$ satisfies
\[
1=\norm{A_n}
=\norm{[B,C]}
\le2\norm{B}\norm{C}.
\]
Hence the minimum norm product is exactly $1/2$,
both for unrestricted factors and for two normal factors.
This completes the proof of Theorem~\ref{thm:main2}.

The example therefore isolates the effect of the prescribed
basis: its paving bounds and normal commutator cost are
uniform in the dimension, while requiring either factor
to be diagonal in that basis forces the commutator cost
to grow without bound.

\section{Formal verification of commutator bounds and the Kadison--Singer theorem}
\label{sec:formal}

We formalize the dimension-independent commutator bound of
Theorem~\ref{thm:main} and the Kadison--Singer theorem in
Lean~4~\cite{moura2021lean}, using Mathlib~\cite{mathlib}.
The former establishes a single commutator bound valid in
every matrix dimension; the latter establishes the uniqueness
of pure state extensions from the bounded diagonal algebra
to $B(\ell^2(\mathbb N;\mathbb C))$.
The development thus covers both uniform estimates for
finite-dimensional matrices and the infinite-dimensional
state-extension formulation of the Kadison--Singer problem.

The formalization also includes the quantitative theorem
of Marcus, Spielman, and Srivastava, the subpolynomial
commutator bounds of Johnson, Ozawa, and Schechtman, and
the explicit separation estimates of
Theorem~\ref{thm:main2}. These results make the scope of
the uniform bound precise: arbitrary commutator factors
admit a dimension-independent bound, whereas requiring a
factor to be diagonal in the prescribed basis leads to
the unbounded quantity $\lambda(m)$.
The formal statements explicitly record the uniform
quantifiers and the restrictions on the factors that
distinguish these conclusions.

All proofs are checked by Lean's kernel and depend only
on the standard axioms \texttt{propext},
\texttt{Classical.choice}, and \texttt{Quot.sound},
with no additional axioms or \texttt{sorryAx} dependencies.
The formalization is available online.\footnote{
\url{https://github.com/WuProver/lean-commutator}}
This section presents the principal formalized results
and their corresponding Lean declarations.


\subsection{Commutator bounds with subpolynomial dimension dependence}

Johnson, Ozawa, and Schechtman~\cite{johnson2013quantitative}
proved that for every $\varepsilon>0$, there exists a constant
$K_\varepsilon>0$, independent of the dimension $n$ and the
matrix $A$, such that every trace-zero $A\in M_n(\C)$ admits
a representation $A=\comm BC$ satisfying
\[
  \norm B\norm C\le K_\varepsilon n^\varepsilon\norm A.
\]
The first Lean declaration formalizes this estimate.
The second records the stronger conclusion that the first
factor $B$ can be chosen normal:

\begin{leancode}
theorem main_commutator_theorem (ε : ℝ) (hε : 0 < ε) :
    ∃ (Kε : ℝ), 0 < Kε ∧
    ∀ (n : ℕ) (A : Matrix (Fin n) (Fin n) ℂ),
      A.trace = 0 →
      ∃ (B C : Matrix (Fin n) (Fin n) ℂ),
        A = ⁅B, C⁆ₘ ∧
        ‖B‖ * ‖C‖ ≤ Kε * (n : ℝ) ^ ε * ‖A‖ := by ...

theorem main_commutator_theorem_normal (ε : ℝ) (hε : 0 < ε) :
    ∃ (Kε : ℝ), 0 < Kε ∧
    ∀ (n : ℕ) (A : Matrix (Fin n) (Fin n) ℂ),
      A.trace = 0 →
      ∃ (B C : Matrix (Fin n) (Fin n) ℂ),
        A = ⁅B, C⁆ₘ ∧
        B * B.conjTranspose = B.conjTranspose * B ∧
        ‖B‖ * ‖C‖ ≤ Kε * (n : ℝ) ^ ε * ‖A‖ := by ...
\end{leancode}

\subsection{The dimension-independent commutator bound}

Theorem~\ref{thm:main} establishes an absolute constant $K>0$,
independent of both the dimension $n$ and the input matrix $A$,
such that every trace-zero $A\in M_n(\C)$ admits a representation
$A=\comm BC$ with
\[
  \norm B\norm C\le K\norm A.
\]
The factors $B$ and $C$ may depend on $A$, with no normality
requirement. The Lean statement makes the uniformity explicit:
the constant $K$ is chosen before quantifying over $n$ and $A$.

\begin{leancode}
theorem uniformCommutatorBound :
    ∃ K : ℝ, 0 < K ∧ ∀ (n : ℕ) (A : Matrix (Fin n) (Fin n) ℂ),
      Matrix.trace A = 0 →
        ∃ B C : Matrix (Fin n) (Fin n) ℂ, A = B * C - C * B ∧ ‖B‖ * ‖C‖ ≤ K * ‖A‖ := by ...

\end{leancode}

\subsection{Diagonal commutators and the obstruction to a uniform bound }



Theorem~\ref{thm:main2} shows by an explicit construction that
the quantity $\lambda(m)$ of Johnson, Ozawa, and
Schechtman~\cite{johnson2013quantitative} cannot be bounded
independently of the matrix dimension. The construction uses
skew-Hadamard matrices to produce zero-diagonal Hermitian
unitaries $A_n$ satisfying
\[
  \lambda(4^k)\ge\lambda(A_{4^k})\ge\frac{\sqrt{k}}4
  \qquad(k\ge1).
\]
Hence $\sup_m\lambda(m)=\infty$. Crucially, this lower bound
holds even when the diagonal factor is chosen separately
for each input matrix.

The same family admits $\varepsilon$-pavings with fewer than
$2\varepsilon^{-2}$ blocks, uniformly in the dimension, and
its optimal commutator cost with both factors normal is
exactly $1/2$. Thus, these matrices have uniform paving and
normal commutator bounds, while their commutator cost becomes
unbounded when one factor must be diagonal in the prescribed
basis.

The example therefore rules out the strategy of proving
paving through a dimension-independent bound for $\lambda(m)$.
The conditional implication in Claim~3
of~\cite{johnson2013quantitative} remains valid, but its
uniform boundedness hypothesis cannot hold.

First, we formalize the zero-diagonal Hermitian unitaries $A_n$
for $n=2^m$ with $m\ge1$. We use a recursive index type
\texttt{Cube m} of cardinality $2^m$:
\begin{leancode}
@[reducible] def Cube : ℕ → Type
  | 0 => Fin 1
  | m + 1 => Cube m ⊕ Cube m

noncomputable def skewMatrix :
    (m : ℕ) → Matrix (Cube m) (Cube m) ℂ
  | 0 => 0
  | m + 1 => fromBlocks
      (skewMatrix m) (skewMatrix m + 1)
      (skewMatrix m - 1) (-skewMatrix m)

noncomputable def pavingMatrix (m : ℕ) :
    Matrix (Cube m) (Cube m) ℂ :=
  (Complex.I / (Real.sqrt ((2 : ℝ) ^ m - 1) : ℂ)) • skewMatrix m
\end{leancode}
Formalizing $S_{2^m}$ as $\texttt{skewMatrix m}$, the recursion gives
$(S_{2^m})_{ii}=0$, $S_{2^m}^*=-S_{2^m}$, and $S_{2^m}^2=-(2^m-1)I$.
Thus \texttt{pavingMatrix m} represents
$A_{2^m}=\mathrm{i}S_{2^m}/\sqrt{2^m-1}$, which has zero diagonal
and satisfies $A_{2^m}^*=A_{2^m}$ and $A_{2^m}^2=I$
for $m\ge1$, as required.

We use \texttt{pavingMinimum A r} to represent $p_r(A)$
defined in~\eqref{eq:PrA}:
\begin{leancode}
def colorClass (c : ι → κ) (a : κ) : Finset ι :=
  Finset.univ.filter (fun i ↦ c i = a)

def compression (s : Finset ι) (A : Matrix ι ι ℂ) : Matrix ι ι ℂ :=
  coordProjection s * A * coordProjection s

def pavingNorm (A : Matrix ι ι ℂ) (c : ι → κ) : ℝ :=
  Finset.univ.sup' Finset.univ_nonempty (fun a ↦ ‖compression (colorClass c a) A‖)

def pavingMinimum (A : Matrix ι ι ℂ) (r : ℕ) [NeZero r] : ℝ :=
  Finset.univ.inf' Finset.univ_nonempty
    (fun c : ι → Fin r ↦ pavingNorm A c)
\end{leancode}

We also use \texttt{lambdaA} to represent the $\lambda(A)$ defined in~\eqref{eq:lambda_A}:
\begin{leancode}
def diagonal [Zero α] (d : n → α) : Matrix n n α :=
  of fun i j => if i = j then d i else 0
  
def lambdaCosts {n : ℕ} (A : Matrix (Fin n) (Fin n) ℂ) : Set ℝ :=
  {t | ∃ z : Fin n → ℂ, ∃ C : Matrix (Fin n) (Fin n) ℂ,
    (∀ i, |(z i).re| ≤ 1 ∧ |(z i).im| ≤ 1) ∧
    A = Matrix.diagonal z * C - C * Matrix.diagonal z ∧ t = ‖C‖}

def lambdaA {n : ℕ} (A : Matrix (Fin n) (Fin n) ℂ) : ℝ := sInf (lambdaCosts A)
\end{leancode}

Then we formalize Theorem~\ref{thm:main2} as follows:

\begin{leancode}
def separationLowerBound (k : ℕ) : ℝ :=
  Real.sqrt ((((3 : ℝ) * k - 1) * (4 : ℝ) ^ k + 1) /
    (32 * ((4 : ℝ) ^ k - 1)))

theorem paving_commutator_separation :
    (∀ m : ℕ, 0 < m →
      Fintype.card (Cube m) = 2 ^ m ∧
      (∀ i, pavingMatrix m i i = 0) ∧ (pavingMatrix m).IsHermitian ∧
      pavingMatrix m * pavingMatrix m = 1 ∧ ‖pavingMatrix m‖ = 1) ∧
    (∀ m l : ℕ, 0 < m → l ≤ m →
      pavingMinimum (pavingMatrix m) (2 ^ l) =
        Real.sqrt ((((2 : ℝ) ^ m / (2 : ℝ) ^ l) - 1) / ((2 : ℝ) ^ m - 1))) ∧
    -- A single block budget, strictly below 2 / ε², works in every dimension.
    (∀ ε : ℝ, 0 < ε → ε < 1 →
      ∃ r : ℕ, 0 < r ∧ (r : ℝ) < 2 / ε ^ 2 ∧
        ∀ m : ℕ, 0 < m → ∃ c : Cube m → Fin r,
          ∀ a : Fin r, ‖compression (colorClass c a) (pavingMatrix m)‖ ≤ ε) ∧
    -- The optimal cost with both commutator factors normal is 1/2.
    (∀ m : ℕ, 0 < m → twoNormalCost (pavingMatrix m) = 1 / 2) ∧
    -- The paper's λ(A), on the coordinate-reindexed family, is at least √k / 4.
    (∀ k : ℕ, 1 ≤ k →
      separationLowerBound k ≤ lambdaA (finFamily k) ∧
      Real.sqrt (k : ℝ) / 4 ≤ separationLowerBound k) ∧
    -- The same precise lower bound holds for diagonal entries in the prescribed square.
    (∀ k : ℕ, 1 ≤ k →
      ∀ z : Cube (2 * k) → ℂ, ∀ C : Matrix (Cube (2 * k)) (Cube (2 * k)) ℂ,
        (∀ i, |(z i).re| ≤ 1 ∧ |(z i).im| ≤ 1) →
        pavingMatrix (2 * k) = Matrix.diagonal z * C - C * Matrix.diagonal z →
        separationLowerBound k ≤ ‖C‖) := by ...

theorem finFamily_lambdaA_lower_bound (k : ℕ) (hk : 1 ≤ k) :
    Real.sqrt ((((3 : ℝ) * k - 1) * (4 : ℝ) ^ k + 1) /
      (32 * ((4 : ℝ) ^ k - 1))) ≤ lambdaA (finFamily k) := by ...
\end{leancode}

\subsection{The Kadison--Singer theorem}

Our formalization of the Kadison--Singer theorem builds on the
main result of Marcus, Spielman, and
Srivastava~\cite[Theorem~1.4]{marcus2015interlacing},
stated here as Theorem~\ref{thm:MSS}.
Its Lean formulation is as follows:
\begin{leancode}
def vecMulVec [Mul α] (w : m → α) (v : n → α) : Matrix m n α :=
  of fun x y => w x * v y

namespace MSSSelection in
noncomputable def energy (v : ι → ℂ) : ℝ := ∑ i, ‖v i‖ ^ 2

theorem finite_mss {κ : Type*} [Fintype κ]
    (v : κ → Ω → ι → ℂ) (p : κ → Ω → ℝ)
    (hp : ∀ i ω, 0 ≤ p i ω) (hsum : ∀ i, ∑ ω, p i ω = 1)
    (hTotal : (∑ i, ∑ ω, (p i ω : ℂ) •
      Matrix.vecMulVec (v i ω) (star (v i ω))) = 1)
    (ε : ℝ) (hε : 0 < ε)
    (henergy : ∀ i, ∑ ω, p i ω * MSSSelection.energy (v i ω) ≤ ε) :
    ∃ q : κ → Ω,
      ‖∑ i, Matrix.vecMulVec (v i (q i)) (star (v i (q i)))‖ ≤ (1 + Real.sqrt ε)^2 := by ...
\end{leancode}




Let $H=\ell^2(\mathbb N;\mathbb C)$, and let
$\mathcal D\subset B(H)$ denote the algebra of bounded operators
that are diagonal with respect to the standard orthonormal basis.
Kadison and Singer~\cite{kadison1959extensions} asked whether
every pure state on $\mathcal D$ admits a unique pure state
extension to $B(H)$. This problem was reformulated in terms of
paving by Anderson~\cite{anderson1979extensions} and resolved
affirmatively by Marcus, Spielman, and
Srivastava~\cite{marcus2015interlacing}.

\begin{theorem}\label{thm:kadison-singer}
For every pure state $\varphi$ on $\mathcal D$, there exists
a unique pure state $\psi$ on $B(H)$ such that
\[
  \psi(d)=\varphi(d)\qquad\text{for every }d\in\mathcal D.
\]
\end{theorem}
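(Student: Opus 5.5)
Anderson's reformulation together with Theorem~\ref{thm:MSS} gives the plan: existence by Hahn--Banach and Krein--Milman, uniqueness from a paving theorem derived from Theorem~\ref{thm:MSS}.

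\emph{Existence.} Extend $\varphi$ to a state on $B(H)$ by Hahn--Banach; states on a unital C$^*$-algebra are exactly the norm-one functionals with $\psi(1)=1$. The set of state extensions of $\varphi$ is nonempty, convex and weak$^*$ compact, so by Krein--Milman it has an extreme point $\psi$. This $\psi$ is pure. If $\psi=\tfrac12(\psi_1+\psi_2)$ with states $\psi_i$, their restrictions to $\mathcal D$ average to $\varphi$. Purity of $\varphi$ forces both restrictions to equal $\varphi$, so the $\psi_i$ lie in the face of extensions and hence equal $\psi$. Uniqueness among all states then implies uniqueness among pure states.

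\emph{Uniqueness.} Let $\psi$ be any state extending $\varphi$. Pure states on $\mathcal D\cong\ell^\infty(\mathbb N)$ are multiplicative, so $\varphi(P_S)\in\{0,1\}$ for every diagonal projection $P_S$. Kadison--Schwarz gives $\psi(P_SXP_S)=\psi(X)$ when $\varphi(P_S)=1$, since $\psi(1-P_S)=0$. Given $X\in B(H)$, it suffices to show $\psi(X)=\varphi(E(X))$, where $E$ is the diagonal conditional expectation. Replacing $X$ by $X-E(X)$ and splitting into real and imaginary parts, we may take $X$ Hermitian with zero diagonal. The required ingredient is a paving theorem: for each $\varepsilon>0$ there is $r(\varepsilon)$ such that every zero-diagonal Hermitian $X$ admits a coordinate partition into $r$ sets $S_j$ with $\norm{P_{S_j}XP_{S_j}}\le\varepsilon\norm X$. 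Since $\varphi$ is multiplicative, exactly one $S_j$ has $\varphi(P_{S_j})=1$, and then $|\psi(X)|=|\psi(P_{S_j}XP_{S_j})|\le\varepsilon\norm X$. Letting $\varepsilon\to0$ gives $\psi(X)=0$.

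\emph{Paving from Theorem~\ref{thm:MSS}.} This is the main obstacle. I would do it first in finite dimensions, then pass to $\ell^2$ by compactness.
\begin{enumerate}[label=(\arabic*)]
\item \emph{Projections.} For an orthogonal projection $P$ in $M_N$ with diagonal entries $\le\delta$, take $x_i$ equal to $\sqrt r\,(Pe_i)\otimes e_c$ with $c$ uniform in $\{1,\dots,r\}$. Then $\sum_i\mathbb E x_ix_i^*=P\otimes I_r$, which is $\le I$, and $\mathbb E\norm{x_i}^2\le r\delta$. Corollary~\ref{cor:MSS-subidentity} yields a coloring with $\norm{P_{S_c}PP_{S_c}}\le(1/\sqrt r+\sqrt\delta)^2$.
\item \emph{Zero-diagonal Hermitian contractions.} Dilate $X$ to the projection $P=\tfrac12\begin{pmatrix}1+X&\sqrt{1-X^2}\\\sqrt{1-X^2}&1-X\end{pmatrix}$, whose diagonal entries in the first coordinate copy equal $\tfrac12$. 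This is Anderson's and Weaver's standard reduction. It converts projection pavings into pavings of $X$ with norm $\le4\bigl((1/\sqrt r+1/\sqrt2)^2-\tfrac12\bigr)$, which is below $1$ for large fixed $r$. Iterating the paving on each block drives the norm below any $\varepsilon$ with a number of blocks depending only on $\varepsilon$.
\item \emph{Passage to $\ell^2(\mathbb N)$.} Pave each compression to $\{1,\dots,N\}$ with a fixed number $r$ of colors. A diagonal argument (Tychonoff on $\{1,\dots,r\}^{\mathbb N}$) extracts a coloring of $\mathbb N$ that restricts consistently along a subsequence of $N$. Lower semicontinuity of the norm under strong convergence of compressions then gives the bound on $\ell^2$.
\end{enumerate}

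I expect step~(2) to be the delicate point, namely getting the constants right so the dilation bound stays below $1$. If that fails I would instead route through Weaver's $KS_2$ formulation, which follows directly from (1) by applying Theorem~\ref{thm:MSS} to the vectors $Pe_i$.
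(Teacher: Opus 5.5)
Your proposal is correct and matches the route the paper indicates: existence via Hahn--Banach and Krein--Milman, and uniqueness via paving of zero-diagonal Hermitian operators, obtained from projection paving and a compactness passage to $\ell^2(\mathbb N)$ (the paper gives no written proof, only that its Lean development derives Theorem~\ref{thm:kadison-singer} from Theorem~\ref{thm:MSS}, with Anderson's paving reformulation cited as context). One precision in step~(2): restricting the coloring of $P$ to the first copy gives only $X_S\le(2\alpha-1)I$, where $\alpha=(1/\sqrt r+1/\sqrt2)^2$, so you also need the second copy (whose block $\tfrac12(I-X)$ gives $X_S\ge-(2\alpha-1)I$) and the common refinement of the two induced colorings, which yields $r^2$ blocks with $\norm{X_S}\le 2/r+2\sqrt{2/r}\to0$, so your iteration is harmless but unnecessary.
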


We formalize
Theorem~\ref{thm:kadison-singer} in Lean as follows and prove it using Theorem~\ref{thm:MSS}.
\begin{leancode}
def State := {φ : A →ₚ[ℂ] ℂ // φ 1 = 1}

def lp (E : α → Type*) [∀ i, NormedAddCommGroup (E i)] (p : ℝ≥0∞) : AddSubgroup (PreLp E) where
  carrier := { f | Memℓp f p }
  zero_mem' := zero_memℓp
  add_mem' := Memℓp.add
  neg_mem' := Memℓp.neg

abbrev Operator := Hilbert →L[ℂ] Hilbert
abbrev Diagonal : Type := lp (fun _ : ℕ ↦ ℂ) ∞

def IsPure (φ : State A) : Prop :=
  ∀ (ψ χ : State A) (t : ℝ), 0 < t → t < 1 →
    (∀ a, φ.val a = t • ψ.val a + (1 - t) • χ.val a) → ψ = φ

def diagonalRepresentation : Diagonal →⋆ₐ[ℂ] Operator where
  toFun := diagonalOperator
  map_zero' := by ext x i; change (0 : ℂ) * x i = 0; simp
  map_one' := by ext x i; simp
  map_add' a b := by
    ext x i
    change (a i + b i) * x i = a i * x i + b i * x i
    ring
  map_mul' a b := by ext x i; simp [mul_assoc]
  commutes' c := by ext x i; simp [Algebra.algebraMap_eq_smul_one]
  map_star' := diagonalOperator_star

theorem kadison_singer (φ : State Diagonal) (hφ : φ.IsPure) :
    ∃! ψ : State Operator, ψ.IsPure ∧ ∀ d : Diagonal,
      ψ.val (diagonalRepresentation d) = φ.val d := by ...
\end{leancode}

\section*{Declaration of Generative AI}
The authors acknowledge the use of GPT-6 Astra and GPT-5.6 Sol during the work, which helps in exploring
proof strategies and finishing the formalization in Lean~4. The authors independently checked and verified all proofs and mathematical details, confirmed that the informal statement of the main theorem is faithfully aligned with its formal counterpart, and refactored parts of the Lean code where necessary. The authors assume full responsibility for the mathematical correctness of the final results.

\section*{Acknowledgements}
The authors  are supported by the National Key R\&D Program of China 2023YFA1009401 and the Strategic Priority Research Program of Chinese Academy of Sciences under Grant XDA0480501. 

\bibliographystyle{cas-model2-names}

\bibliography{cas-refs}
\end{document}